\newcolumntype{Y}{>{\centering\arraybackslash}X}
\newcommand{\floor}[1]{\left\lfloor #1 \right\rfloor}
\numberwithin{equation}{section}
\title{The fibering genus of Fano hypersurfaces}
\author{Nathan Chen, Benjamin Church, Lena Ji, and David Stapleton}
\definecolor{mycolor}{RGB}{146, 214, 203}
\definecolor{myothercolor}{RGB}{179, 215, 232}
\newtheorem{theorem}{Theorem}
\newtheorem{corollary}[theorem]{Corollary}
\newtheorem{lemma}[theorem]{Lemma}
\newtheorem{proposition}[theorem]{Proposition}
\newtheorem{notation}[theorem]{Notation}
\numberwithin{theorem}{section}
\newtheorem{Lthm}{Theorem}
\newcommand{\hr}[2]{\hyperref[#1]{#2}}
\theoremstyle{definition}
\newtheorem{remark}[theorem]{Remark}
\newtheorem{construction}[theorem]{Construction}
\newtheorem{question}[theorem]{Question}
\newtheorem*{question*}{Question}
\newtheorem{example}[theorem]{Example}
\newtheorem{definition}[theorem]{Definition}
\def\CC{{\mathbb C}}
\def\PP{{\mathbb{P}}}
\def\Spec{{\mathrm{Spec} \ }}
\def\Oc{{\mathcal{O}}}
\def\Lc{{\mathcal{L}}}
\newcommand{\m}{\mathfrak{m}}
\newcommand{\iO}{\mathcal{O}}
\newcommand{\Csh}{\mathcal{C}}
\newcommand{\Supp}[2]{\mathrm{Supp}_{#1}\left(#2\right)}
\newcommand{\wt}[1]{\widetilde{#1}}
\newcommand{\embed}{\hookrightarrow}
\newcommand{\onto}{\twoheadrightarrow}
\def\Xc{{\mathcal{X}}}
\def\Ic{{\mathcal{I}}}
\def\cl{{\colon}}
\def\kapbar{{\overline{\kappa}}}
\DeclareMathOperator{\sign}{\mathrm{sign}}
\def\dra{{\dashrightarrow}}
\def\ra{{\rightarrow}}
\def\kapbar{{\overline{\kappa}}}
\def\Frac{{\mathrm{Frac} \ }}
\def\covgon{{\mathrm{cov.gon}}}
\def\fg{{\mathrm{fib.gen}}}
\def\fgon{{\mathrm{fib.gon}}}
\def\covgenus{{\mathrm{cov.gen}}}
\def\covgon{{\mathrm{cov.gon}}}
\def\irr{{\mathrm{irr}}}
\def\gon{{\mathrm{gon}}}
\renewcommand{\L}{\mathcal{L}}
\newcommand{\red}{\mathrm{red}}
\newcommand{\mybigwedge}{\raisebox{.25ex}{\scalebox{0.86}{$\bigwedge$}}}
\newcommand{\defi}[1]{\textit{#1}}
\newcommand{\kk}{\mathbf{k}}
\pgfplotsset{compat=1.15}
\definecolor{graycolor}{rgb}{0.66,0.66,0.66}
\newcommand{\Yeta}{\eta}
\newcommand{\Ynoteta}{\xi}
\thanks{During the preparation of this article, N.C. was partially supported by NSF MSPRF grant DMS-2103099, L.J. was partially supported by NSF MSPRF grant DMS-2202444, and D.S. was partially supported by NSF grant DMS-1952399.}
\subjclass[2020]{Primary: 14D06. Secondary: 14E08, 14J70}
\begin{document}
\maketitle

\thispagestyle{empty}

\vspace{-24pt}
\begin{abstract}
Koll\'ar proved that a very general $n$-dimensional complex hypersurface of degree at least $3\lceil (n+3)/4\rceil$ is not birational to a fibration in rational curves. This is most interesting when the hypersurface is Fano, in which case it is covered by rational curves. In this paper, we extend Koll\'ar's ideas and show that for any genus \(g\), there are Fano hypersurfaces (in more restrictive degree and dimension ranges) that are not birational to fibrations in genus \(g\) curves. In other words, we show that the \defi{fibering genus} of these hypersurfaces can be arbitrarily large. The fibering genus of a variety has been studied in work of Konno, Ein--Lazarsfeld, and Voisin, but this is the first paper to explore these ideas in the Fano range. Following Koll\'ar, we degenerate to characteristic $p>0$ to rule out these fibrations. A crucial input is Tate's genus change formula and its generalizations, which imply that any regular curve of genus \(g\) is smooth if \(p\) is sufficiently large compared to \(g\).
\end{abstract}

\section{Introduction}

The purpose of this paper is to study which complex hypersurfaces \(X\) are birationally fibered in curves of a given genus, using the specialization to characteristic $p>0$ technique of Koll\'ar. In the Fano range, \(X\) is rationally connected, so it is always swept out by curves of genus \(0\). It is interesting to ask however, if $X$ can be swept out \textit{birationally} by curves of genus 0 (or higher genus for that matter). For very general Fano hypersurfaces $X\subset \PP^{n+1}_\CC$ of degree $d\ge 3\lceil \tfrac{n+3}{4} \rceil$, Koll\'ar proved that the answer is no for genus 0, namely \(X\) is not birational to a ruled variety or to a conic bundle (\cite{Kollar95} and \cite[Thm. V.5.14]{KollarRationalCurves}). In this paper, we extend Koll\'ar's results to higher genus.

We consider an invariant called the \defi{fibering genus} (Definition~\ref{def:fibgenus}), defined to be the smallest integer \(g\) such that $X$ admits a rational fibration in genus $g$ curves. For surfaces with trivial Albanese, this coincides with the minimal geometric genus of a pencil of connected divisors on $X$ and was first studied by Konno \cite{Konno08} in the context of smooth surfaces in $\PP^{3}$. Konno proved that in this case, pencils of minimal genus are given by projection from a line. Ein--Lazarsfeld investigated this invariant for polarized K3 surfaces and abelian surfaces \cite{EL20} of Picard rank 1 and showed that these invariants asymptotically grow like a constant times $\sqrt{L^{2}}$, where $L$ is a generator of the Picard group. Most recently, Voisin \cite{Voisin-fibrations} studied the fibering genus and other related invariants for hyper-K\"ahler manifolds. Our main result is that there are Fano hypersurfaces of arbitrarily large fibering genus.

\begin{Lthm}\label{thm:fibering-genus}
Let \(X\subset\PP^{n+1}_{\CC}\) be a very general hypersurface of degree \(d\) and dimension \(n \geq 3\). If
\[ d \geq p \left\lceil \frac{n + \floor{(g+5)/2}}{p+1}\right\rceil \]
for some positive integer $g \ge 1$ and prime \(p\geq 2g+3\), then 
\[ \fg(X)\geq g+1. \] 
For example, if \(d\geq 5\left\lceil\tfrac{n+3}{6}\right\rceil\), then \(X\) is not birational to a conic bundle or a genus one fibration.
\end{Lthm}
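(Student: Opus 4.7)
The plan is to follow Koll\'ar's degeneration-to-characteristic-$p$ strategy. By a standard Hilbert-scheme specialization argument, it suffices to exhibit, for some prime $p \geq 2g+3$, a single hypersurface $X_p \subset \PP^{n+1}_{\Fbar}$ of degree $d$ which admits no rational fibration whose generic fiber has geometric genus $\leq g$. The bound $p \geq 2g+3$ is imposed so that Tate's genus-change formula ensures any regular curve of geometric genus $\leq g$ over a function field is in fact smooth; consequently a hypothetical genus-$\leq g$ fibration $X_p \dashrightarrow B$ passes, after resolution, to a morphism $\varphi \colon Y \to B$ whose general fiber $F$ is a \emph{smooth} curve of genus $g' \leq g$.

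The technical heart of the proof is to choose $X_p$ in the spirit of Koll\'ar, as a hypersurface admitting a $p$-cyclic cover structure (for instance, of the form $x_0^p\, s(\mathbf{x}) = t(\mathbf{x})$ for suitably generic $s$, $t$ of degrees $d-p$ and $d$). Exploiting the inseparable cover on a resolution $\pi \colon Y \to X_p$ produces an injection of line bundles $M \hookrightarrow \Omega^{n-1}_Y$, and the numerical bound $d \geq p\ceil{(n+\floor{(g+5)/2})/(p+1)}$ is exactly what is needed to guarantee $M \cdot C \geq 2g-1$ for every irreducible curve $C \subset Y$ lying in a covering family --- i.e., every curve that could arise as a generic fiber of a codimension-one fibration on $Y$.

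Given such $M$, the contradiction comes from restricting to a general fiber $F$ of $\varphi$. The relative cotangent sequence of $\varphi$ restricts on $F$ to
\[ 0 \to \Oc_F^{\oplus(n-1)} \to \Omega^1_Y|_F \to \omega_F \to 0, \]
and taking $(n-1)$st exterior powers gives a filtration of $\Omega^{n-1}_Y|_F$ with only two nonzero graded pieces: $\Oc_F$ (the ``top'' piece $\det\varphi^*\Omega^1_B|_F$) and $\omega_F^{\oplus(n-1)}$. The restricted map $M|_F \to \Omega^{n-1}_Y|_F$ therefore either factors through $\Oc_F$, giving $\deg M|_F \leq 0$, or descends to a nonzero map $M|_F \to \omega_F^{\oplus(n-1)}$, giving a nonzero map $M|_F \to \omega_F$ and hence $\deg M|_F \leq 2g'-2 \leq 2g-2$. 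Either way $M \cdot F \leq 2g-2$, contradicting the construction.

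The main obstacle is the construction of $M$ with the sharpened intersection bound $M \cdot C \geq 2g-1$ against \emph{every} covering family of curves. Koll\'ar's original argument produces a big sub-line bundle of $\omega_Y$, which is enough to rule out ruled fibrations ($g=0$), but the quantitative extension to arbitrary $g$ requires tighter control of the intersection numbers of $M$ with fibration-type curves, not merely its position in the pseudo-effective cone. This refinement, together with the use of Tate's formula to handle the potential non-smoothness of low-genus regular curves in characteristic $p$, is the essential new input beyond Koll\'ar's work; the concrete inequality in the hypothesis on $d$ is precisely the degree threshold that makes both ingredients compatible.
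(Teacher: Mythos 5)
Your high-level strategy --- specialize to characteristic $p$, use Koll\'{a}r's $\mu_p$-cover degeneration to produce an injection of a positive line bundle into $\Omega^{n-1}$, apply Tate's genus-change theorem to force general fibers of the resulting fibration to be smooth, and derive a contradiction from the relative cotangent sequence restricted to a general fiber --- is the right one and matches the paper's approach. But there are two genuine gaps.

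First, your argument hinges on achieving the intersection bound $M \cdot C \geq 2g-1$ for every irreducible curve $C$ arising as a general fiber of a covering fibration, and you flag this as ``the main obstacle'' without supplying a proof. This bound is not actually available in this form: the line bundle from Koll\'{a}r's construction is $L = r^*\Oc_{\PP^{n+1}}(\gamma-1)$ with $\gamma = pe+e-n-1$, so $L\cdot C = (\gamma-1)\deg(r(C))$ depends on the projective degree of $C$, which you do not control and which could be small. The paper circumvents this by working not with an intersection number but with the pointwise positivity property that $L$ separates $\gamma$ points on an open set (automatic for a pullback of $\Oc(\gamma-1)$, independent of the fibration). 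After restricting to a general smooth fiber $C$ and using precisely your filtration of $\Omega^{n-1}_Y|_C$, one shows a projection $L|_C \to \omega_C$ is a nonzero map of line bundles, hence injective on $H^0$; so $\omega_C$ itself separates $\gamma$ points, and geometric Riemann--Roch then gives $\gon(C) \geq \gamma+1$. Combining this with the Brill--Noether bound $\gon(C) \leq \lfloor(g'+3)/2\rfloor$ yields the contradiction. Your intersection-theoretic variant would at best give a Clifford-type inequality, which fails to produce a contradiction for small $g$ (in particular $g = 1, 2$).

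Second, the reduction ``it suffices to exhibit a single $X_p$ in characteristic $p$ with no genus-$\leq g$ fibration'' is not a standard Hilbert-scheme specialization argument; it is the paper's main technical input in Section 3. The claim is that the fibering genus can only drop under specialization, and the paper explicitly remarks that the analogous known result for covering gonality (via Kontsevich moduli spaces) does not transfer. The subtlety is that after resolving a rational fibration to a morphism over the DVR, the generic fiber of the special fiber is a curve over a possibly \emph{imperfect} function field, and comparing its arithmetic genus to that of the geometric generic fiber requires careful control of multiplicities, Stein factorizations, and normalizations of non-reduced central fibers of regular and normal degenerations of curves. Without this, the passage to $X_p$ is unjustified.
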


Every Fano variety is covered by genus \(0\) curves; however, as Theorem~\ref{thm:fibering-genus} shows, for every \(g\) there exists a Fano variety of fibering genus \( \geq g\) in sufficiently high dimension. In the case $g = 0$, our methods also recover Koll\'{a}r's theorem on conic bundle structures mentioned above (see Remark~\ref{rem:conic-bundles}). The prime \(p\) in Theorem~\ref{thm:fibering-genus} is chosen to ensure that any regular curve of arithmetic genus \(g\) is smooth in characteristic \(p\) (see \S\ref{sec:fibrations-bg}). In the table below, we record some values ensuring lower bounds on the fibering genus of a very general \(n\)-dimensional degree \(d\) hypersurface in \(\PP^{n+1}_{\CC}\) (asymptotically in \(n\)):

\begin{center}
    \setlength{\tabcolsep}{10pt}\def\arraystretch{1.25}
    \begin{tabular}{c | c c c c c c c}
    \(\fg(X)\geq\) & 1 & 2 & 3 & 5 & 6 & 8 & 9 \\
    \hline
    \(p\) & 3 & 5 & 7 & 11 & 13 & 17 & 19 \\
    \hline
    \(d \gtrsim \underline{\hspace{.5cm}} \) & \(3n/4\) & \(5n/6\) & \(7n/8\) & \(11n/12\) & \(13n/14\) & \(17n/18\) & \(19n/20\)
    \end{tabular}
\end{center}

We also obtain a lower bound for fibering genus that, although less sharp than Theorem~\ref{thm:fibering-genus}, is expressed explicitly in terms of \(n\) and \(d\) (Corollary~\ref{cor:explicit_bound}). One particular consequence of our bound is:

\begin{Lthm}[Follows from Corollary~\ref{cor:explicit_bound}]\label{thm:asymptotic-index-sqrt}
Let \(X\subset\PP^{n+1}_{\CC}\) be a very general hypersurface of dimension $n \ge 3$ and degree $d\ge n+2 - \tfrac{1}{4}\sqrt{n+2}$. Then
\[ \fg(X) \ge \frac{1}{5} \sqrt{n+2} - 1. \]
\end{Lthm}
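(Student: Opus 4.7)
The plan is to apply Theorem~\ref{thm:fibering-genus} with a carefully chosen genus $g$ and prime $p$ depending on $n$. Setting $g := \lceil \tfrac{1}{5}\sqrt{n+2} - 1 \rceil$ makes the target conclusion $\fg(X) \geq g+1 \geq \tfrac{1}{5}\sqrt{n+2} > \tfrac{1}{5}\sqrt{n+2} - 1$ automatic, so it remains only to find a prime $p \geq 2g+3$ for which the degree hypothesis of Theorem~\ref{thm:fibering-genus},
\[ d \geq p\left\lceil \frac{n + \lfloor (g+5)/2 \rfloor}{p+1} \right\rceil, \]
is implied by the given bound $d \geq n + 2 - \tfrac{1}{4}\sqrt{n+2}$.

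Using the elementary estimate $\lceil x \rceil \leq x+1$, a sufficient condition becomes
\[ d \geq n + \lfloor (g+5)/2\rfloor + p - \frac{n + \lfloor (g+5)/2\rfloor}{p+1}. \]
With $g = \Theta(\sqrt{n+2})$, the optimal scale for $p$ is also $p = \Theta(\sqrt{n+2})$, chosen as small as possible in the interval $[2g+3,\infty)$ so as to maximize the beneficial term $-\frac{n}{p+1}$ while keeping the additive penalty $p$ small. By Bertrand's postulate, there is always a prime in $[2g+3, 4g+6)$, and expanding the right-hand side using $(\sqrt{n+2})^2 = n+2$ reduces the sufficient condition to a polynomial inequality in $\sqrt{n+2}$ whose dominant terms close up with room to spare; the constant $\tfrac{1}{5}$ in the choice of $g$ is precisely calibrated to match the constant $\tfrac{1}{4}$ in the hypothesis on $d$.

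The hard part will be verifying the inequality uniformly for all $n \geq 3$, not merely asymptotically, since the rounding in the floor and ceiling together with the slack in the Bertrand interval for $p$ introduce corrections of the same order as the margin in the inequality. For small $n$, however, the claimed lower bound $\tfrac{1}{5}\sqrt{n+2} - 1$ is either vacuous (for $n \leq 23$, where it is nonpositive) or else easily handled by direct invocation of Theorem~\ref{thm:fibering-genus} with the smallest admissible values $g = 1$, $p = 3$. All of this bookkeeping is naturally absorbed into Corollary~\ref{cor:explicit_bound}, which expresses a lower bound on $\fg(X)$ as an explicit function of $n$ and $d$; Theorem~\ref{thm:asymptotic-index-sqrt} then follows from a direct substitution into that bound.
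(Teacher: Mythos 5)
Your final paragraph correctly identifies the paper's route: the theorem is obtained by substituting the hypothesis on $d$ directly into Corollary~\ref{cor:explicit_bound}, and indeed the paper offers no argument beyond the bracketed remark ``Follows from Corollary~\ref{cor:explicit_bound}.'' However, the first two-thirds of your proposal, which attempt to run Theorem~\ref{thm:fibering-genus} directly with a Bertrand-postulate choice of $p$, are an unnecessary detour: you correctly diagnose that the floor/ceiling roundings and the slack in the Bertrand interval introduce corrections of the same order as the margin, but that is exactly the bookkeeping that Corollary~\ref{cor:explicit_bound} has already packaged. The cleaner move is to discard that discussion and begin from the corollary. You also leave the actual substitution implicit. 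To close it: set $m = n+2$ and $\iota = m - d$, so the hypothesis is $\iota \le \tfrac{1}{4}\sqrt{m}$; rearranging Corollary~\ref{cor:explicit_bound} to $\sqrt{\iota^2 + (9/2)(m-\iota)} \ge \iota + \tfrac{9}{5}\sqrt{m}$ and squaring (handling the cases $\iota \le 0$ or $\iota + \tfrac{9}{5}\sqrt{m}<0$ trivially) gives a linear inequality in $\iota$ that, at the worst case $\iota = \tfrac{1}{4}\sqrt{m}$, reduces to $\sqrt{m} \ge 25/8$, i.e.\ $n \ge 8$. For $n \le 23$ the right-hand side $\tfrac{1}{5}\sqrt{n+2}-1$ is nonpositive, so the claim is automatic, and the two ranges overlap. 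Your remark about separately invoking Theorem~\ref{thm:fibering-genus} with $g=1$, $p=3$ for small $n$ is superfluous once this is written down, and as stated it is imprecise (those parameters only give $\fg(X)\ge 2$, which does not cover the full non-vacuous range $n\ge 24$).
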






\subsection{Comparison with previous results and other measures of irrationality}\label{sec:other-measures-irr} For \(g \leq 2\), there have been previous works showing the non-existence of genus \(g\) fibrations of hypersurfaces. The techniques of birational rigidity \cite{Pukhlikov-bir-rigid} can be used to rule out genus \(0\) fibrations in some cases (e.g. smooth index \(1\) Fano hypersurfaces, see \cite{Kollar19}). For \(g=1\), Grassi and Wen showed that in dimensions $3\leq n \leq 5$, smooth Calabi--Yau hypersurfaces do not admit genus \(1\) fibrations \cite{Grassi91, GrassiWen}. Assuming the existence and termination of klt flips, they also prove the same result for all $n \geq 3$. For genus \(1\) fibrations with a section (i.e. Jacobian fibrations of dimension one), the first and fourth author proved that very general hypersurfaces of degree $d\ge 5 \lceil \tfrac{n+3}{6} \rceil$ do not birationally admit this structure \cite{CS-rat-end}, by studying rational endomorphisms on hypersurfaces. Finally, complex varieties that have a birational fibration in genus 2 curves (or more generally hyperelliptic curves) admit a birational involution whose quotient is birational to the image of the relative canonical linear system of the fibration. One can rule out the existence of such birational involutions (and therefore, genus 2 fibrations) for very general Fano hypersurfaces of degree $d\ge 3\lceil \tfrac{n+3}{4}\rceil$ using \cite[Theorem A]{CJS22}.

Several other measures of irrationality involving curves have been previously defined and studied. For hypersurfaces, the covering gonality first appeared in work of Pirola--Lopez \cite{PL95}, where they showed that the \defi{covering gonality} of a smooth surface $X \subset \PP^{3}$ of degree $d \geq 4$ is equal to $\covgon(X) = d-2$. This was later generalized to higher-dimensional hypersurfaces of large degree by \cite{BDELU} and \cite{BCFS19}. 
Here \(\covgon(X)\) is the minimal gonality of the general fiber of a covering family \(\mathcal C\to X\) of curves. One can similarly define the \defi{covering genus} \(\covgenus(X)\) to be the minimal genus of such a covering family. We have the inequalities
\[\covgon(X)\leq \left\lfloor\frac{\covgenus(X)+3}{2}\right\rfloor\leq \left\lfloor\frac{\fg(X)+3}{2}\right\rfloor.\]
The results of \cite{BDELU, BCFS19} show that if $X_d \subset \PP^{n+1}_{\CC}$ is a general hypersurface of degree $d$, then $\covgon(X_d) \ge d - n$, thereby giving a lower bound on the fibering genus in the general type range (see Example~\ref{fg-general-type}). Our results do not quite reproduce this bound for $d \gg n$ (we do get a lower bound asymptotically linear in $d - n$ but with a worse constant), but we give an improvement in the range $d \lesssim n + \tfrac{1}{2} \sqrt{n}$. 

The first and fourth authors proved that if \(X\subset\PP^{n+1}_{\CC}\) is a very general hypersurface of degree \(d\geq n+1-\tfrac{1}{4}\sqrt{n+2}\), then \(\irr(X) \geq \tfrac{1}{4}\sqrt{n+2}\) \cite{CS-deg-irr}, where the \defi{degree of irrationality} \(\irr(X)\) is the minimal degree of a dominant rational map \(X\dashrightarrow\PP^n\) (in fact, they showed that the same bound holds for the minimal degree map to a ruled variety). However, there are no clear inequalities relating these invariants to the fibering genus. As we point out in Remark~\ref{rem:MapsToRuledVar}, the main result of \cite{CS-deg-irr} can be used to give a similar bound to Theorem~\ref{thm:asymptotic-index-sqrt} with $\frac{1}{5}$ replaced by $\frac{1}{8}$, as long as one can show by other methods that \(\fg(X)\neq 1\). In the present paper, we directly rule out fibrations by low genus curves, using an obstruction that applies uniformly to both genus $\le 1$ and $\ge 2$ fibrations.

For an arbitrary hypersurface $X \subset \PP^{n+1}_{\CC}$ of degree $d$, one can always project from a line $\ell$ and define a map $f_{\ell} \colon X \dashrightarrow \PP^{n-1}_{\CC}$. Depending on whether the line $\ell$ is contained in $X$ or not, the general fiber of $f_{\ell}$ will be either a degree $d-1$ or degree $d$ plane curve (possibly singular). Therefore, we always have the upper bounds $\fg(X) \leq \frac{1}{2}(d-1)(d-2)$ and $\fgon(X) \leq d-1$, where \(\fgon(X)\) is the minimal gonality of the general fiber of a fibration of \(X\) in curves.

\begin{question}
For Fano hypersurfaces over $\CC$, can one give any obstructions to the existence of low \textit{gonality} fibrations?
\end{question}

It is worth mentioning that there is no analogue of Theorem~\ref{thm:tate-genus-change} for gonality: one cannot guarantee smoothness of curves in large characteristic by fixing only the gonality (see Example~\ref{exmp:bad-curves}\eqref{item:non-normal}).

\subsection{Outline of argument} Theorem~\ref{thm:fibering-genus} is a consequence of Theorem~\ref{thm:fib_genus_inequality}, which we prove in three parts. First, we show that the fibering genus can only drop under specialization (Proposition~\ref{prop:fg-specializes}). Then we apply a construction of Mori to degenerate from a very general complex hypersurface to a $\mu_p$-cover of a hypersurface in characteristic $p>0$ (Construction~\ref{construction:degeneration}). Finally, for these $\mu_p$-covers, we give lower bounds on the fibering genus using Koll\'ar's construction of $(n-1)$-forms, thereby giving a lower bound on the fibering genus for complex hypersurfaces. One technical point is that Bertini's theorem fails in characteristic \(p\): it's possible for a morphism between smooth varieties to have everywhere singular fibers. In this last step, we apply Tate's genus change formula (and its generalizations, see Theorem~\ref{thm:tate-genus-change}) to ensure that these curve fibrations on the \(\mu_p\)-covers have smooth general fibers.

\subsection*{Acknowledgements} We would like to thank Johan de Jong, Louis Esser, Antonella Grassi, J\'anos Koll\'ar, Eric Riedl, and Ravi Vakil for enlightening conversations and discussions.

\section{Background}

By \defi{curve}, we mean a separated finite type scheme over $k$ of pure dimension $1$. A \defi{variety} \(X\) over \(k\) is an integral separated scheme of finite type, and we denote its function field by \(\kk(X)\). Note that by our convention, not every curve is a variety.

\begin{definition}
Let $C$ be a proper curve over $k$. The \defi{arithmetic genus of $X$ over \(k\)} is
\[ p_a(C/k) \coloneqq \dim_k H^1(C, \Oc_C). \]
\end{definition}
Note that our definition differs from another definition, \(\chi_k(\mathcal O_C) - 1\), which is frequently used in the literature. The two definitions coincide when \(H^0(C,\mathcal O_C)=k\) (e.g. if \(C\) is geometrically integral over \(k\)). In particular, if \(C\) is smooth, proper, and geometrically integral over \(k\), then \(p_a(C/k)\) is equal to the geometric genus \(\dim_k H^0(C,\omega_C)\).

In this paper, we will study the \defi{fibering genus} of hypersurfaces by specializing to characteristic $p$. For this reason, we give a definition that works over any field:

\begin{definition}\label{def:fibgenus}
Let $X$ be a proper variety over a field. The \defi{fibering genus}, or $\fg(X)$, is the smallest integer $g \geq 0$ such that there exists a normal proper model $\tilde{X}$ of $X$ and a morphism $\tilde{X} \rightarrow B$ to a variety $B$ of dimension equal to $\dim X - 1$ whose generic fiber is a geometrically irreducible curve of arithmetic genus $g$ over \(\kk(B)\).
\end{definition}
Note that the fibering genus is always a non-negative integer. In characteristic \(0\), this definition agrees with \cite[Definition 1.1]{Voisin-fibrations}.

\begin{definition}
Let $C$ be an integral curve over an algebraically closed field $k$. We define the gonality of $C$, \(\gon(C)\), to be the minimal degree of a map $\tilde{C} \rightarrow \PP^{1}_{k}$, where $\tilde{C}$ is the normalization of $C$.
\end{definition}

If \(C\) is a smooth curve over an algebraically closed field with \(p_a(C/k)=g\), then Brill--Noether theory shows that \cite[Proposition A.1(v)]{Poonen-gonality}\begin{equation}\label{eqn:gon-genus-inequality}\gon(C)\leq\left\lfloor\frac{g+3}{2}\right\rfloor.\end{equation} We will use this inequality to give lower bounds on the genus of curves that appear in fibrations.
\begin{example}\label{fg-general-type}
The inequality~\eqref{eqn:gon-genus-inequality} immediately implies \[\covgon(X) \leq \left\lfloor \frac{\covgenus(X)+3}{2}\right\rfloor\] (see \S\ref{sec:other-measures-irr} for the definitions of covering gonality and genus).
For smooth general type hypersurfaces, combining this inequality with \cite[Theorem A]{BDELU} and the inequality \(\covgenus(X)\leq\fg(X)\) shows that \textit{any} smooth hypersurface \(X\subset\PP^{n+1}_{\CC}\) of dimension \(n\) and degree \(d\geq n+2\) has \(\fg(X) \geq 2(d-n)-3\). However, we suspect that this bound is far from being optimal.
\end{example}

\subsection{Properties of fibrations by curves and their generic fibers}\label{sec:fibrations-bg}
In this section, we collect some results about curve fibrations that we will use in positive characteristic. First, recall that the properties of the geometric generic fiber reflect those of a general fiber.
\begin{lemma}\label{lem:geom-generic-fiber}
    Let \(f\colon X\to Y\) be a morphism of varieties over a perfect field.
    \begin{enumerate}
        \item\label{part:irred-fibers} If the generic fiber is geometrically irreducible over \(\kk(Y)\), then a general fiber is irreducible \cite[\href{https://stacks.math.columbia.edu/tag/0559}{Tag 0559}]{stacks-project}.
        \item\label{part:normal-etc-fibers} The generic fiber is geometrically normal (resp. smooth, geometrically reduced) over \(\kk(Y)\) if and only if a general fiber is normal (resp. regular, reduced) \cite[Proposition 2.1]{PW}.
    \end{enumerate}
\end{lemma}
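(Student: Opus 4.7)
Both parts are stated with references, so the plan is to invoke the two cited results and to indicate the main ideas underlying each.

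For part~(1), the strategy is constructibility. By \cite[\href{https://stacks.math.columbia.edu/tag/0559}{Tag 0559}]{stacks-project}, the set
\[ \{y \in Y : X_y \text{ is geometrically irreducible over } \kappa(y)\} \]
is constructible in $Y$. Since the hypothesis places the generic point in this set, it contains a dense open subset of $Y$. Fibers over this open set are geometrically irreducible over their residue fields, hence in particular irreducible.

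For part~(2), I would appeal to \cite[Proposition 2.1]{PW}. The forward direction uses that normality, regularity, and reducedness are open conditions on fibers of a finite-type morphism once one passes to geometric fibers: if the generic fiber is geometrically normal (resp. smooth, geometrically reduced), this openness yields a dense open subset of $Y$ over which fibers are normal (resp. regular, reduced). The converse goes by a specialization/limit argument together with perfectness of the base field, which ensures that the property inherited by the generic fiber from general fibers actually survives base change to $\overline{\kk(Y)}$.

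The main obstacle, and the reason part~(2) is not purely formal, is the asymmetry in positive characteristic between ``regular'' and ``smooth'' (and between ``normal'' and ``geometrically normal''). The statement is phrased precisely to respect this asymmetry, matching the geometric property on the generic-fiber side with its ungeometric analogue on the general-fiber side; a careful proof must verify that the openness and specialization arguments work exactly in this form, which is the content of the Patakfalvi--Waldron reference.
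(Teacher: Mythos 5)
Your proposal is correct and takes essentially the same approach as the paper, which simply invokes \cite[\href{https://stacks.math.columbia.edu/tag/0559}{Tag 0559}]{stacks-project} and \cite[Proposition 2.1]{PW} without further argument. Your sketches of the underlying ideas are sound, with one small imprecision in the converse direction of part~(2): the role of perfectness is not that the property ``survives base change to $\overline{\kk(Y)}$'' after being inherited, but rather that residue fields at closed points of $Y$ are themselves perfect, so that normal (resp.\ regular, reduced) closed fibers are automatically geometrically normal (resp.\ smooth, geometrically reduced); one then concludes by constructibility of the geometric-property locus in $Y$, which, containing a dense set of closed points, must contain the generic point.
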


The following result will allow us to ensure that the fibrations we use will have irreducible general fibers, by taking the Stein factorization and using Lemma~\ref{lem:geom-generic-fiber}\eqref{part:irred-fibers}.
\begin{lemma}[{\cite[Propositions 2.1 and 2.2]{Tanaka-invariants}}]\label{lem:fibrationa-irreducible}
Let \(V\) be a proper variety over a field \(k\). \begin{enumerate} \item If \(V\) is normal, then \(k\) is algebraically closed in \(\kk(V)\) if and only if \(H^0(V,\mathcal O_V)=k\). \item If \(k\) is algebraically closed in \(\kk(V)\), then \(V\) is geometrically irreducible over \(k\).  \end{enumerate}
\end{lemma}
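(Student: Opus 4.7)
For part (1), the plan is to identify $H^0(V,\mathcal O_V)$ with the algebraic closure of $k$ inside $\kk(V)$; the claim will then be immediate. Properness of $V$ makes $H^0(V,\mathcal O_V)$ a finite-dimensional $k$-algebra, so every global section is algebraic over $k$: concretely, a section $\alpha$ corresponds to a morphism $V\to \AA^1_k$ whose image is proper in $\AA^1_k$, hence finite, forcing $\alpha$ to satisfy a polynomial over $k$. Conversely, if $\alpha\in\kk(V)$ satisfies a monic polynomial with coefficients in $k\subseteq\mathcal O_{V,v}$, then $\alpha$ is integral over every local ring $\mathcal O_{V,v}$; normality of $V$ makes each such $\mathcal O_{V,v}$ integrally closed in its fraction field $\kk(V)$, so $\alpha\in\mathcal O_{V,v}$ for every $v\in V$, and by the sheaf property $\alpha\in H^0(V,\mathcal O_V)$. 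This proves the identification, and $(1)$ follows tautologically.

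For part (2), the plan is to reduce geometric irreducibility to the field-theoretic behavior of the function field extension $\kk(V)/k$. Since $V$ is integral, it suffices to show that $V\times_k\bar k$ is irreducible for an algebraic closure $\bar k/k$, and this in turn is equivalent to $\Spec(\kk(V)\otimes_k\bar k)$ having a unique minimal prime (as the components of $V\times_k\bar k$ all dominate $V$). Let $k'$ denote the algebraic closure of $k$ in $\kk(V)$; because $\kk(V)/k$ is finitely generated, $k'/k$ is a finite extension, and the hypothesis gives $k'=k$. In particular, the separable algebraic closure of $k$ in $\kk(V)$ is $k$. A standard computation then shows that the only idempotents of $\kk(V)\otimes_k\bar k$ are the trivial ones, because any nontrivial idempotent would come from a nontrivial separable subextension of $\kk(V)/k$. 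Hence $V\times_k\bar k$ is irreducible.

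The main obstacle will be the field-theoretic input for $(2)$: namely, that the hypothesis on algebraic (as opposed to merely separable) closures suffices to rule out additional components in positive characteristic, where purely inseparable phenomena can occur. Part $(1)$ is relatively routine once the identification with the integral closure of $k$ in $\kk(V)$ is pinned down, whereas part $(2)$ rests on the standard equivalence between geometric irreducibility of an integral variety $V$ and separable-algebraic-closedness of $k$ inside $\kk(V)$, which one can either invoke from the Stacks Project or prove directly via the idempotent analysis sketched above.
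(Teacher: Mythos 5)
The paper does not actually prove this lemma; it only cites \cite[Propositions 2.1 and 2.2]{Tanaka-invariants}. So there is no proof in the paper to compare against, and your argument is an independent reconstruction of the cited result.

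Your proof of part (1) is correct. The identification of $H^0(V,\Oc_V)$ with the algebraic closure of $k$ in $\kk(V)$ uses exactly the two standard inputs — finiteness of $H^0(V,\Oc_V)$ over $k$ via properness (making it a finite field extension of $k$ since it is a domain inside $\kk(V)$) and integral closedness of the local rings via normality — and the sheaf-theoretic gluing is valid for an integral scheme since $H^0(V,\Oc_V)=\bigcap_v \Oc_{V,v}$ inside $\kk(V)$.

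For part (2), your overall route is also the standard one (pass to the generic fiber, reduce to the spectrum of $\kk(V)\otimes_k\bar k$, and use that $k$ separably closed in $\kk(V)$ controls the geometry of that spectrum). However, the step ``the only idempotents of $\kk(V)\otimes_k\bar k$ are the trivial ones, hence $V\times_k\bar k$ is irreducible'' is a logical gap as written: absence of nontrivial idempotents gives \emph{connectedness}, not irreducibility, and these are a priori different for a Noetherian ring (e.g. $k[x,y]/(xy)$ is connected, reduced, and reducible). What rescues the argument in this situation is additional structure that you do not state: $\kk(V)\otimes_k k^{\mathrm{sep}}$ is a filtered colimit of finite \'etale $\kk(V)$-algebras, hence a finite product of fields at each stage, so connected components coincide with irreducible components there; and since $\bar k/k^{\mathrm{sep}}$ is purely inseparable, $\Spec(\kk(V)\otimes_k\bar k)\to\Spec(\kk(V)\otimes_k k^{\mathrm{sep}})$ is a homeomorphism. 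Alternatively (and more cleanly, given that the hypothesis gives algebraic, not just separable, closedness): for any finite extension $k'=k[x]/(f)$, the polynomial $f$ stays irreducible over $\kk(V)$ because the coefficients of any monic factor are symmetric functions of roots of $f$ and hence algebraic over $k$, forcing them into $k$ by the hypothesis; thus $\kk(V)\otimes_k k'$ is a field, and taking the filtered colimit over all finite $k'/k$ shows $\kk(V)\otimes_k\bar k$ is a field, whose spectrum is a point. Either way the conclusion holds, but you should replace the idempotent step with one of these precise arguments (or simply cite the Stacks Project equivalence as you suggest).
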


In every prime characteristic \(p\), there exist fibrations by curves where the total space is normal but every fiber is singular. The generic fiber of such a fibration is a curve  over an imperfect field that is regular but \textit{not smooth}.
\begin{example}\label{exmp:bad-curves}
    Let \(k_0\) be a perfect field of characteristic \(p>0\).
    \begin{enumerate}
        \item If \(p=2\) or \(3\), consider a quasi-elliptic fibration over \(k_0\) \cite[\S1]{Bombieri-Mumford}, and let \(C\) be the generic fiber. Then \(C\) is regular but not smooth over \(k\coloneqq H^0(C,\Oc_C)\), and \(p_a(C/k)=1\).
        \item\label{item:non-normal} (Rosenlicht, \cite[6.9.3]{KambayashiMiyanishiTakeuchi}) Assume \(p\geq 3\), let \(k=k_0(s)\) be the function field in \(1\) variable, and let \(C\) be the normalization of the plane curve defined by \(y^2 z^{p-2} - x^p + s z^p\). Then \(H^0(C,\Oc_C)=k\), the arithmetic genus is \(p_a(C/k)=\frac{1}{2}(p-1)\), and the base change \(C\otimes_k k^{1/p}\) is non-normal.
        \item\label{item:non-reduced} Let \(k=k_0(s,t)\) be the function field in two variables, and let \(C\) be the plane curve defined by \(sx^p+ty^p+z^p\). Then \(C\) is a regular curve with \(H^0(C,\Oc_{C})=k\), the arithmetic genus is \(p_a(C/k)=\frac{1}{2}(p-1)(p-2)\), and the base change \(C\otimes_k k^{1/p}\) is non-reduced.
    \end{enumerate}
\end{example}

When the characteristic of the field is large compared to the genus, such pathological behavior as in Example~\ref{exmp:bad-curves} cannot occur. The following result generalizes Tate's genus change formula \cite{Tate}: 
\begin{theorem}[{\cite{Tate} in the geometrically integral case, \cite{PW} in general}]\label{thm:tate-genus-change}
    Let \(C\) be a regular integral proper curve over a field \(k\) of characteristic \(p>0\). Assume \(H^0(C,\mathcal O_C)=k\), and let \(g \coloneqq p_a(C/k)\). If \(p\geq 2g+3\), then \(C\) is smooth over \(k\).
\end{theorem}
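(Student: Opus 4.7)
The plan is to prove the contrapositive: if $C$ is not smooth over $k$, then $p_a(C/k) \geq (p-1)/2$, which contradicts the hypothesis $p \geq 2g+3$ (equivalently, $(p-1)/2 \geq g+1 > g$).

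By the assumption $H^0(C, \Oc_C) = k$ together with Lemma~\ref{lem:fibrationa-irreducible}, $k$ is algebraically closed in $\kk(C)$ and $C$ is geometrically irreducible over $k$, so $C_{\bar k}$ is irreducible. Let $\tilde C$ denote the normalization of the underlying reduced subscheme $(C_{\bar k})_{\red}$; this is a smooth proper integral curve over $\bar k$ of some arithmetic genus $g' \geq 0$. The central step is to establish the bound $g - g' \geq (p-1)/2$ whenever $C$ fails to be smooth.

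When $C$ is geometrically reduced (so $C_{\bar k}$ is integral but singular), this is Tate's original argument \cite{Tate}: one writes $g - g' = \sum_{P} \delta_P$ summed over the singular points of $C_{\bar k}$, where $\delta_P = \dim_{\bar k}(\tilde\Oc_P / \Oc_{C_{\bar k}, P})$. The key input, coming from the regularity of $C$ together with the necessarily inseparable residue field extension at the corresponding point of $C$, is that the $p$-th power Frobenius on the local ring forces each $\delta_P$ to be a positive integer multiple of $(p-1)/2$. If instead $C_{\bar k}$ is non-reduced (cf.\ Example~\ref{exmp:bad-curves}\eqref{item:non-reduced}), then there is an additional contribution from the nilpotent ideal sheaf $\mathcal N \subset \Oc_{C_{\bar k}}$, analyzed via the short exact sequence
\[
0 \to \mathcal N \to \Oc_{C_{\bar k}} \to \Oc_{(C_{\bar k})_{\red}} \to 0,
\]
together with the interaction of $\mathcal N$ with the Frobenius descent; this is the setting of the generalization of Tate's formula in \cite{PW}.

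The main obstacle is this second case: the bookkeeping is considerably more delicate than Tate's classical argument, since one must simultaneously track the loss in genus from passing to the reduction and from normalizing the reduced curve, and show that the combined contribution is still divisible by (and hence bounded below by) $(p-1)/2$. Granted the lower bound $g - g' \geq (p-1)/2$ in both cases, we conclude $g \geq g - g' \geq (p-1)/2 \geq g+1$, the desired contradiction, so $C$ must be smooth over $k$.
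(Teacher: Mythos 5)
Both your proposal and the paper's proof hinge on the same deep input --- Tate's genus change theorem and its generalization by Patakfalvi--Waldron --- and both extract the contradiction from the bound $p-1 \le 2g$, but the packaging differs. You pass directly to $\bar k$ and invoke the statement that the genus drop $g - g'$ (from $C$ to the normalization of $(C_{\bar k})_{\red}$) is a positive multiple of $(p-1)/2$ whenever $C$ fails to be smooth; as you correctly flag, the geometrically non-reduced case (cf.\ Example~\ref{exmp:bad-curves}\eqref{item:non-reduced}) carries the real difficulty, and your sketch leaves that bookkeeping to the citation of \cite{PW}. The paper instead stops at the one-step extension $k^{1/p}$, where the ramification formula of \cite{PW} and \cite{NagamachiTakamatsu} gives $\phi^*K_C - K_Y \sim (p-1)D$ with $D$ a nonzero effective Weil divisor on the normalization $Y$ of $(C\otimes_k k^{1/p})_{\red}$. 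Taking $\deg_k$ of both sides and bounding $\deg\phi$ and $-\deg_{k^{1/p}}\omega_Y$ (the latter by $2$ via \cite{CTX}) yields $p-1\le 2g$ in a single uniform computation, with the cases $g=0$ and $g=1$ split off by direct citation because the degree estimate requires $2g-2\ge 0$. Both routes are correct and give the same sharp threshold, but the paper's has the advantage that the $k^{1/p}$-level canonical-divisor formula is precisely the statement proved in the cited references, whereas the $\bar k$-level divisibility you invoke implicitly consolidates several Frobenius steps and still requires the ``main obstacle'' you identify to be actually carried out rather than deferred.
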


\begin{proof}
    For \(g=0\), see \cite[Lemma 6.5]{CTX}. If \(g= 1\) and if \(p\geq 5\) then \(C\) is smooth over \(k\) by \cite[Corollary 1.8]{PW}, so we may assume \(g\geq 2\) and \(p\neq 2\). We will show that if \(C\) is not geometrically normal over \(k\), then \(p\leq 2g+1\). In this setting, let \(Y\) be the normalization of \((C\otimes_k k^{1/p})_{\red}\) and \(\phi\colon Y\to C\) the induced morphism. By \cite[Theorems 1.1 and 1.2]{PW} and \cite[Corollary 3.3]{NagamachiTakamatsu} we have \(\phi^*K_C - K_Y\sim(p-1)D\) for some nonzero effective Weil divisor \(D\). Taking \(\deg_k\) and using that \(\deg_k(\phi^*\omega_C) = \deg(\phi)\deg_k(\omega_C)\) \cite[\href{https://stacks.math.columbia.edu/tag/0AYZ}{Tag 0AYZ}]{stacks-project} and \(\deg_k(\omega_C) = 2g-2\) \cite[\href{https://stacks.math.columbia.edu/tag/0C19}{Tag 0C19}]{stacks-project}, we get
\begin{equation}\label{eqn:genus-bound-g} (p-1)[k^{1/p}:k] \leq (p-1)[k^{1/p}:k]\deg_{k^{1/p}}(\mathcal O_Y(D)) = \deg(\phi)(2g-2) - [k^{1/p}:k]\deg_{k^{1/p}}(\omega_Y). \end{equation}
By \cite[Lemma 5.4]{JW} and the fact that \(-\deg_{k^{1/p}}(\omega_Y) \leq 2\) by \cite[Lemma 6.5]{CTX}, we have that
\[\deg(\phi)(2g-2) - [k^{1/p}:k]\deg_{k^{1/p}}(\omega_Y) \leq [k^{1/p}:k] (2g-2 - \deg_{k^{1/p}}(\omega_Y))
\leq [k^{1/p}:k] 2g. \]
Thus, after dividing~\eqref{eqn:genus-bound-g} through by \([k^{1/p}:k]\), we get \(p-1 \leq (p-1)\deg_{k^{1/p}}(\mathcal O_Y(D)) \leq 2g.\)
\end{proof}
The bound in Theorem~\ref{thm:tate-genus-change} is sharp for all \(g\), as shown by Example~\ref{exmp:bad-curves}\eqref{item:non-reduced} with \(p=2\) for \(g=0\), and Example~\ref{exmp:bad-curves}\eqref{item:non-normal} for \(g\geq 1\) (noting that \(2g+2\) is never prime for \(g\geq 1\)).

\section{Degenerations of curves and fibering genus}

The main goal of this section is to prove that fibering genus specializes (Proposition~\ref{prop:fg-specializes}). In order to prove this result, we will first need a careful analysis of the numerical invariants of degenerations of curves with normal total space. Throughout, we will work in the following setting:

\begin{notation}\label{notation:DVR}
    Let $(R, \m)$ be an excellent DVR with fraction field $K = \Frac{R}$ and residue field $\kappa = R/\m$.
\end{notation}

\begin{definition}
A \defi{degeneration of curves} is a proper flat family $X \to \Spec{R}$ over a DVR $R$ where the generic fiber $X_{K}$ is an integral normal projective curve over $K$.
\end{definition}

Now let $X \to \Spec{R}$ be a normal degeneration of curves, and consider the following data. Let $\{\Gamma_i\}_{i=1}^r$ be the irreducible components of the reduced subscheme $(X_\kappa)_\red$ of the special fiber, and let \(\Gamma_i^\nu\) be the normalization of \(\Gamma_i\). Define the following $\kappa$-algebras:
\begin{enumerate}
\item $A = H^0(X_\kappa, \iO_{X_\kappa})$,
\item $\kappa' = H^0((X_\kappa)_\red, \iO_{(X_\kappa)_\red})$,
\item $\kappa_i = H^0(\Gamma_i, \iO_{\Gamma_i})$, and
\item \(\kappa_i^\nu = H^0(\Gamma_i^\nu, \iO_{\Gamma_i^\nu})\).
\end{enumerate}
Then $A$ is an Artin local $\kappa$-algebra, and $\kappa \subset \kappa' \subset \kappa_i$ and \(\kappa_i\subset\kappa_i^\nu\) are finite field extensions by \cite[\href{https://stacks.math.columbia.edu/tag/0BUG}{Tag 0BUG}]{stacks-project} and \cite[\href{https://stacks.math.columbia.edu/tag/04L2}{Tag 04L2}]{stacks-project}, since these schemes are connected and the last three are also reduced.

\begin{proposition} \label{prop:regular_inequality}
Let $X \to \Spec{R}$ be a regular degeneration of curves with $H^0(X_K,\Oc_{X_K})=K$. Let $\{ \Gamma_i \}_{i = 1}^r$ be the irreducible components of \((X_\kappa)_{\red}\) with reduced structure, and let \(\Gamma_i^\nu\) be the normalization of \(\Gamma_i\). Then 
\[ \sum_{i = 1}^r p_a(\Gamma_i^\nu / \kappa_i^\nu) \le \sum_{i = 1}^r p_a(\Gamma_i^\nu / \kappa_i) \le \sum_{i = 1}^r p_a(\Gamma_i / \kappa_i) \le p_a(X_K/K). \]
\end{proposition}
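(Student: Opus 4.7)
The strategy for the proposition proceeds by addressing the three inequalities in turn.

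The first, $\sum p_a(\Gamma_i^\nu/\kappa_i^\nu) \leq \sum p_a(\Gamma_i^\nu/\kappa_i)$, is immediate from the formula $p_a(\Gamma_i^\nu/\kappa_i) = [\kappa_i^\nu:\kappa_i] \cdot p_a(\Gamma_i^\nu/\kappa_i^\nu)$, together with $[\kappa_i^\nu:\kappa_i] \geq 1$. The second, $\sum p_a(\Gamma_i^\nu/\kappa_i) \leq \sum p_a(\Gamma_i/\kappa_i)$, follows from the normalization exact sequence $0 \to \Oc_{\Gamma_i} \to (\nu_i)_*\Oc_{\Gamma_i^\nu} \to \mathcal{C}_i \to 0$ on $\Gamma_i$: the cokernel $\mathcal{C}_i$ is supported on the (finite) non-normal locus of $\Gamma_i$, so the long exact sequence in cohomology gives a surjection $H^1(\Gamma_i, \Oc_{\Gamma_i}) \twoheadrightarrow H^1(\Gamma_i^\nu, \Oc_{\Gamma_i^\nu})$, and taking $\kappa_i$-dimensions yields the desired inequality.

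The main work lies in the rightmost inequality $\sum p_a(\Gamma_i/\kappa_i) \leq p_a(X_K/K)$. Flatness of $X \to \Spec R$ gives $\chi_\kappa(\Oc_{X_\kappa}) = \chi_K(\Oc_{X_K}) = 1 - p_a(X_K/K)$, the second equality using $H^0(X_K) = K$. Next, I would chain two surjections on $H^1$: from the nilradical sequence $0 \to \mathcal{N} \to \Oc_{X_\kappa} \to \Oc_{(X_\kappa)_{\red}} \to 0$ (with $H^2(\mathcal{N}) = 0$ since $\mathcal{N}$ lives on the one-dimensional scheme $X_\kappa$) one obtains $H^1(\Oc_{X_\kappa}) \twoheadrightarrow H^1(\Oc_{(X_\kappa)_{\red}})$; from the partial-normalization sequence $0 \to \Oc_{(X_\kappa)_{\red}} \to \bigoplus_i \Oc_{\Gamma_i} \to Q \to 0$ (with $H^1(Q) = 0$ since $Q$ is supported on the finite intersection locus of the components) one obtains $H^1(\Oc_{(X_\kappa)_{\red}}) \twoheadrightarrow \bigoplus_i H^1(\Oc_{\Gamma_i})$. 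Composing and taking $\kappa$-dimensions yields $\dim_\kappa H^1(\Oc_{X_\kappa}) \geq \sum_i [\kappa_i:\kappa] \cdot p_a(\Gamma_i/\kappa_i) \geq \sum_i p_a(\Gamma_i/\kappa_i)$.

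The remaining step---and the main technical obstacle---is to verify $H^0(X_\kappa,\Oc_{X_\kappa}) = \kappa$; combined with the $\chi$-identity this forces $p_a(X_K/K) = \dim_\kappa H^1(\Oc_{X_\kappa})$, completing the proof. To this end, Lemma~\ref{lem:fibrationa-irreducible} applied to $X_K$ (integral, normal, with $H^0(X_K) = K$) shows $X_K$ is geometrically integral, hence connected. Since $X_K$ is dense in the regular scheme $X$, the total space $X$ is itself connected and, by regularity, integral. Therefore $f_*\Oc_X$ is a finite $R$-subalgebra of $\kk(X)$ with generic fiber $K$, and since $R$ is integrally closed in $K = \Frac R$ we conclude $f_*\Oc_X = R$. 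A standard cohomology-and-base-change argument---using that $X_\kappa$ is a Cartier divisor on the regular surface $X$, hence Cohen--Macaulay---then upgrades this to $H^0(X_\kappa, \Oc_{X_\kappa}) = \kappa$.
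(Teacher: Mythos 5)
Your handling of the first two inequalities is fine and agrees in substance with the paper: the first is the scaling identity $p_a(\Gamma_i^\nu/\kappa_i) = [\kappa_i^\nu:\kappa_i]\,p_a(\Gamma_i^\nu/\kappa_i^\nu)$, and the second is the $H^1$-surjection coming from the normalization sequence (this is exactly Stacks project Tag 0CE4, which the paper cites). Your chain of $H^1$-surjections from the nilradical and partial-normalization sequences is also the same device the paper uses.

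The gap is in the step that is supposed to close the third inequality, namely the claim that $H^0(X_\kappa,\Oc_{X_\kappa}) = \kappa$. Since $\chi_\kappa(\Oc_{X_\kappa}) = 1 - p_a(X_K/K)$, you have
\[ \dim_\kappa H^1(X_\kappa,\Oc_{X_\kappa}) \;=\; \dim_\kappa H^0(X_\kappa,\Oc_{X_\kappa}) - 1 + p_a(X_K/K), \]
so your argument only yields $\sum_i p_a(\Gamma_i/\kappa_i) \le p_a(X_K/K) + \bigl(\dim_\kappa H^0(X_\kappa,\Oc_{X_\kappa}) - 1\bigr)$, and you must show that the error term vanishes. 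But $H^0(X_\kappa,\Oc_{X_\kappa}) = \kappa$ is \emph{not} automatic for a regular degeneration. Writing the snake of $0 \to \Oc_X \xrightarrow{\pi} \Oc_X \to \Oc_{X_\kappa} \to 0$ gives $H^0(X_\kappa,\Oc_{X_\kappa})/\kappa \cong H^1(X,\Oc_X)[\pi]$, and when the special fiber $X_\kappa = \sum_i m_i\Gamma_i$ has $d := \gcd(m_i) > 1$ (a ``multiple fiber,'' which in positive characteristic can be wild), $H^1(X,\Oc_X)$ can have $\pi$-torsion, so that $H^0(X_\kappa,\Oc_{X_\kappa})$ is a nonreduced Artin local $\kappa$-algebra strictly larger than $\kappa$. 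Neither $f_*\Oc_X = R$ (which gives only connectedness of $X_\kappa$, via Zariski) nor Cohen--Macaulayness of the Cartier divisor $X_\kappa$ nor semicontinuity rules this out; semicontinuity in fact runs the wrong way, giving $\dim_\kappa H^0(X_\kappa,\Oc_{X_\kappa}) \ge 1$.

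This is precisely the subtlety the paper's proof is designed to sidestep. The paper introduces the auxiliary effective Cartier divisor $D = \sum_i (m_i/d)\Gamma_i$, for which $\kappa_D := H^0(D,\Oc_D)$ \emph{is} a field (Stacks Tag 0C68), and uses the multiplicative Euler-characteristic relation $\chi(\Oc_{X_\kappa}) = d\cdot\chi(\Oc_D)$ (Stacks Tag 0C69) together with flatness to conclude $p_a(D/\kappa_D) \le p_a(X_K/K)$; the cohomology surjections are then run with $D$ (whose reduction equals $(X_\kappa)_{\red}$) in place of $X_\kappa$, working over $\kappa_D$ rather than $\kappa$. To repair your argument you would need to reproduce this $\gcd$ device; the ``standard cohomology-and-base-change'' assertion does not substitute for it.
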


\begin{proof}
Let \(m_i\) be the multiplicity of \(\Gamma_i\) in \(X_\kappa\), let \(d \coloneqq \gcd(m_i)\), and define the effective Cartier divisor \(D \coloneqq \sum_{i = 1}^r (m_i / d) C_i\).
By \cite[\href{https://stacks.math.columbia.edu/tag/0C68}{Tag 0C68}]{stacks-project} and \cite[\href{https://stacks.math.columbia.edu/tag/0C69}{Tag 0C69}]{stacks-project}, $D$ satisfies
\begin{enumerate}
\item $\kappa_D \coloneqq H^0(D, \iO_{D})$ is a finite field extension of $\kappa$, and
\item $\chi(X_\kappa, \iO_{X_\kappa}) = d \cdot \chi(D, \iO_{D})$.
\end{enumerate}
Hence, if $g \coloneqq p_a(X_{K}/K)$ and $g_D \coloneqq p_a(D/\kappa_D)$, we have
\[ g - 1 = d [\kappa_D : \kappa] (g_D - 1). \]
Therefore,
\[  g_D = \frac{g-1}{d [\kappa_D : \kappa]} + 1 \le g \]
since either $g = 0$, in which case $g_D = 0$ and $d = [\kappa_D : \kappa] = 1$, or $g > 0$, in which case we see that $g_D \le g$. Furthermore, since $(X_\kappa)_{\red} = D_{\red}$, we see that $(X_\kappa)_{\red}$ is a proper $\kappa_D$-scheme. In particular $\kappa_D \subset \kappa'$ and also
\(H^1(X_\kappa, \iO_{D}) \onto H^1(X_\kappa, \iO_{(X_\kappa)_{\red}})\).
So we conclude that
\[ p_a((X_\kappa)_{\red}/\kappa') \le p_a((X_\kappa)_{\red}/\kappa_D) \le p_a(D/\kappa_D) = g_D \le g = p_a(X_K/K). \]
Next, we compare the arithmetic genus of \((X_\kappa)_{\red}\) to that of its components. Consider the finite map $\pi\colon \bigsqcup_{i=1}^r \Gamma_i \to (X_\kappa)_{\red}$ splitting the irreducible components. This gives a sequence of sheaves,
\begin{center}
\begin{tikzcd}
0 \arrow[r] & \iO_{(X_\kappa)_{\red}} \arrow[r] & \prod_{i=1}^r \iO_{\Gamma_i} \arrow[r] & \Csh \arrow[r] & 0,
\end{tikzcd}
\end{center}
and since $\dim{\Supp{}{\Csh}} = 0$, we have a surjection
\[ H^1\left((X_\kappa)_{\red}, \iO_{(X_\kappa)_{\red}}\right) \onto \bigoplus_{i=1}^r H^1(\Gamma_i, \iO_{\Gamma_i}). \]
Hence,
\[ \sum_{i = 1}^r p_a(\Gamma_i/\kappa_i) \le \sum_{i = 1}^r p_a(\Gamma_i / \kappa') \le p_a((X_\kappa)_{\red}/\kappa') \le p_a(X_K/K). \]
Finally, for each \(i\), we have the inequality \(p_a(\Gamma_i^\nu/\kappa_i) \leq p_a(\Gamma_i/\kappa_i)\) \cite[\href{https://stacks.math.columbia.edu/tag/0CE4}{Tag 0CE4}]{stacks-project}.
\end{proof}

\begin{proposition} \label{prop:normal_inequality}
Let $X \to \Spec{R}$ be a normal degeneration of curves over an excellent DVR, and assume $H^0(X_K,\Oc_{X_K}) = K$. Let $\Gamma_i \subset (X_\kappa)_{\red}$ be an irreducible component. Then
\( p_a(\Gamma_i^\nu / \kappa_i) \le p_a(X_K/K) \).
\end{proposition}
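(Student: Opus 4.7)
The plan is to reduce Proposition~\ref{prop:normal_inequality} to the already-proven regular case (Proposition~\ref{prop:regular_inequality}) via resolution of singularities. Since $R$ is excellent and $X$ is a two-dimensional excellent normal scheme, Lipman's theorem on resolution of singularities furnishes a projective birational morphism $\pi\colon \tilde X \to X$ with $\tilde X$ regular. Because $\pi$ is an isomorphism over the regular locus of $X$ (which contains the generic fiber, as $X_K$ is already a normal curve), the composition $\tilde X \to \Spec R$ is itself a regular degeneration of curves with $\tilde X_K = X_K$, so the hypotheses of Proposition~\ref{prop:regular_inequality} are inherited and $p_a(\tilde X_K/K) = p_a(X_K/K)$. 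Moreover, normality of $X$ gives $\pi_*\Oc_{\tilde X} = \Oc_X$, which will be useful in the final comparison step.

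Next, apply Proposition~\ref{prop:regular_inequality} to $\tilde X$. The irreducible components of $(\tilde X_\kappa)_{\red}$ consist of the strict transforms $\tilde\Gamma_j$ of the $\Gamma_j$, together with the exceptional components of $\pi$. Since every summand appearing in the inequality of Proposition~\ref{prop:regular_inequality} is nonnegative, we can drop all but one term to extract an individual bound for the strict transform $\tilde\Gamma_i$: writing $\tilde\kappa_i := H^0(\tilde\Gamma_i, \Oc_{\tilde\Gamma_i})$,
\[ p_a(\tilde\Gamma_i^\nu / \tilde\kappa_i) \le p_a(X_K/K). \]
Because $\tilde\Gamma_i \to \Gamma_i$ is a proper birational morphism of integral $\kappa$-curves, the two curves share the same function field, and consequently the same normalization: $\tilde\Gamma_i^\nu = \Gamma_i^\nu$. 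So the displayed inequality reads $p_a(\Gamma_i^\nu / \tilde\kappa_i) \le p_a(X_K/K)$.

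The remaining (and hardest) step is to identify $\tilde\kappa_i$ with $\kappa_i$. In general $\kappa_i \subseteq \tilde\kappa_i$, and the inclusion can be strict when $\tilde\Gamma_i$ is a nontrivial partial normalization of $\Gamma_i$; since $p_a(\Gamma_i^\nu/\kappa_i) = [\tilde\kappa_i:\kappa_i] \cdot p_a(\Gamma_i^\nu/\tilde\kappa_i)$, the desired bound could a priori fail by a factor of $[\tilde\kappa_i:\kappa_i]$. I expect one can show $\tilde\kappa_i = \kappa_i$ in this setting by combining the identity $\pi_*\Oc_{\tilde X} = \Oc_X$ with the long exact sequence obtained from applying $R\pi_*$ to $0 \to \Oc_{\tilde X}(-\tilde\Gamma_i) \to \Oc_{\tilde X} \to \Oc_{\tilde\Gamma_i} \to 0$, perhaps after choosing the resolution $\pi$ carefully (e.g.\ so that the strict transform meets the exceptional locus transversally). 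Once the equality $\tilde\kappa_i = \kappa_i$ is established, the displayed inequality above becomes the desired bound $p_a(\Gamma_i^\nu/\kappa_i) \le p_a(X_K/K)$, completing the reduction.
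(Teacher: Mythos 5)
Your proof takes exactly the same route as the paper: Lipman resolution $\pi\colon\wt X\to X$ (an isomorphism over $X^{\mathrm{reg}}$), apply Proposition~\ref{prop:regular_inequality} to $\wt X$, drop summands to isolate the strict transform $\wt\Gamma_i$ of $\Gamma_i$, and observe that $\wt\Gamma_i^\nu\cong\Gamma_i^\nu$. The point you flag in your last paragraph --- that the regular case hands you $p_a(\wt\Gamma_i^\nu/\wt\kappa_i)\le p_a(X_K/K)$ while the statement wants $p_a(\Gamma_i^\nu/\kappa_i)$, and these differ by the factor $[\wt\kappa_i:\kappa_i]$ in the unhelpful direction --- is a genuine subtlety, and it is worth noting that the paper's own proof does not explicitly address it either: the paper records only that $\wt\Gamma_i^\nu\to\Gamma_i^\nu$ is an isomorphism, which controls $\kappa_i^\nu$ but not $\kappa_i$.

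Your sketched repair via $\pi_*\Oc_{\wt X}=\Oc_X$ and the pushforward of $0\to\Oc_{\wt X}(-\wt\Gamma_i)\to\Oc_{\wt X}\to\Oc_{\wt\Gamma_i}\to 0$ does not obviously close the gap: it only reproduces the inclusion $\Oc_{\Gamma_i}\into\pi_*\Oc_{\wt\Gamma_i}$ you already know, since you would additionally need $\pi_*\Oc_{\wt X}(-\wt\Gamma_i)=\Ic_{\Gamma_i}$, and the long exact sequence does not force equality on $H^0$. So as written, the equality $\wt\kappa_i=\kappa_i$ remains unproven in your argument (as it does in the paper). What does transfer cleanly is the weaker conclusion $p_a(\Gamma_i^\nu/\kappa_i^\nu)\le p_a(X_K/K)$, because $\kappa_i^\nu=H^0(\Gamma_i^\nu,\Oc_{\Gamma_i^\nu})$ is intrinsic to the normalization and $\wt\Gamma_i^\nu\cong\Gamma_i^\nu$; this is the first term in the chain of Proposition~\ref{prop:regular_inequality}, and it is exactly what Corollary~\ref{cor:SpecializingRelDim1DVR}'s ``in particular'' and the proof of Proposition~\ref{prop:fg-specializes} actually invoke. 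A cleaner version of both your argument and the paper's would be to state Proposition~\ref{prop:normal_inequality} with $\kappa_i^\nu$ in place of $\kappa_i$, which is what the downstream applications need and what the resolution argument proves without further work.
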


\begin{proof}
Since $R$ is excellent, by \cite{Lipman78}, there exists a strong desingularization $\pi \colon \wt{X} \to X$, meaning that \(\wt{X}\) is regular and \(\pi\) is an isomorphism over the regular locus of $X$. Then $\wt{X} \to \Spec R$ is a regular model of $\wt{X}_{K} \cong X_K$ and hence by Proposition~\ref{prop:regular_inequality} verifies the inequality. 
Since $\wt{X} \to X$ is an isomorphism away from a finite set of points, for each $\Gamma_i \subset X_\kappa$ there is an irreducible component $\wt{\Gamma}_i \subset \wt{X}_s$ mapping birationally onto $\Gamma_i$. 
This induces a birational morphism on the normalizations \(\wt{\Gamma}_i^\nu\to\Gamma_i^\nu\), which is an isomorphism.
\end{proof}

We now apply this to prove the following result, which we use in the proof of Proposition~\ref{prop:fg-specializes}.

\begin{corollary}\label{cor:SpecializingRelDim1DVR}
Let $f\colon X \to Y$ be a morphism of relative dimension $1$ between flat, proper, normal, integral schemes over an excellent DVR \(R\). Assume $Y_\kappa$ is integral and the map $f_\kappa \colon X_\kappa \to Y_\kappa$ is dominant.
Let $\Yeta \in Y$ and $\Ynoteta \in Y_\kappa$ be the generic points.
Let $X_1, \dots, X_r$ be the irreducible components of $(X_\kappa)_{\red}$, and define the multiplicities $m_i$ by
\[
X_\kappa = \sum_{i=1}^r m_i X_i.
\]
Let \(I\subset\{1,\ldots,r\}\) be the indices corresponding to the components \(X_i\) that dominate \(Y_\kappa\). For \(i\in I\), let $\Gamma_i$ be the generic fiber of $X_i\ra Y_\kappa$, and let \(\Gamma_i^\nu\to\Gamma_i\) be the normalization.

Then the following hold:
\begin{enumerate}
    \item\label{item:X_Ynoteta-decomposition} $X_\Ynoteta = \sum_{i\in I} m_i \Gamma_i$.
    \item\label{item:X_Ynoteta-genus} For $i\in I$, set $\kappa_i \coloneqq H^0(\Gamma_i, \Oc_{\Gamma_i})$ and \(\kappa_i^\nu \coloneqq H^0(\Gamma_i^\nu, \Oc_{\Gamma_i^\nu})\). Then 
    \[p_a(\Gamma_i^\nu/\kappa_i^\nu) \le p_a(\Gamma_i^\nu / \kappa_i) \le p_a(X_\Yeta / \kappa(\Yeta)).\]
    In particular, the Stein factorization \(X_i^\nu \to B_i \to Y_\kappa \) has the property that the general fiber of $X_i \to B_i$ is an irreducible curve, and the generic fiber \(C_i\) of \(X_i\to B_i\) has genus $p_a(C_i/\kappa_i^\nu)\le p_a(X_\Yeta/\kappa(\Yeta))$.
\end{enumerate}
\end{corollary}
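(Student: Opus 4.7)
The plan is to reduce part~\eqref{item:X_Ynoteta-genus} to Proposition~\ref{prop:normal_inequality} by localizing $Y$ at the codimension-one point $\xi$ and applying the proposition to the resulting normal degeneration of curves.

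For part~\eqref{item:X_Ynoteta-decomposition}: because $R$ is a DVR, $X_\kappa$ is an effective Cartier divisor on $X$ with scheme-theoretic decomposition $X_\kappa = \sum_{i=1}^r m_i X_i$. Computing $X_\xi = X_\kappa \times_{Y_\kappa} \Spec \kappa(\xi)$ by flat base change, the components $X_i$ with $i \notin I$ contribute nothing since they do not dominate $Y_\kappa$, while for $i \in I$ the contribution is $\Gamma_i$ by definition, with the multiplicity $m_i$ preserved under base change. This yields $X_\xi = \sum_{i \in I} m_i \Gamma_i$.

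For part~\eqref{item:X_Ynoteta-genus}, set $R' \coloneqq \Oc_{Y,\xi}$. Because $Y$ is normal integral and $\xi$ is the generic point of the Cartier divisor $Y_\kappa$, the ring $R'$ is an excellent DVR with fraction field $\kappa(\eta)$ and residue field $\kappa(\xi)$. The base change $X' \coloneqq X \times_Y \Spec R' \to \Spec R'$ is flat and proper with $X'$ normal integral, with generic fiber $X_\eta$ and, by part~\eqref{item:X_Ynoteta-decomposition}, special fiber $\sum_{i \in I} m_i \Gamma_i$. Applying Proposition~\ref{prop:normal_inequality} to $X' \to \Spec R'$ gives $p_a(\Gamma_i^\nu/\kappa_i) \le p_a(X_\eta/\kappa(\eta))$ for each $i \in I$, and the remaining inequality $p_a(\Gamma_i^\nu/\kappa_i^\nu) \le p_a(\Gamma_i^\nu/\kappa_i)$ follows from the identity $p_a(\Gamma_i^\nu/\kappa_i) = [\kappa_i^\nu : \kappa_i] \cdot p_a(\Gamma_i^\nu/\kappa_i^\nu)$. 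The main subtlety is verifying the hypothesis $H^0(X_\eta, \Oc_{X_\eta}) = \kappa(\eta)$ needed to apply Proposition~\ref{prop:normal_inequality}; if it fails, we first replace $R'$ by (a localization at a maximal ideal of) its integral closure $R''$ inside $H^0(X', \Oc_{X'})$, which is again an excellent DVR, apply the proposition to the Stein factorization $X' \to \Spec R'' \to \Spec R'$, and recover the bound via $p_a(X_\eta/\kappa(\eta)) = [\Frac R'' : \kappa(\eta)] \cdot p_a(X_\eta/\Frac R'')$.

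For the Stein factorization statement, form $X_i^\nu \to B_i \to Y_\kappa$. By construction $\kappa_i^\nu = H^0(\Gamma_i^\nu, \Oc_{\Gamma_i^\nu})$ is the residue field at the generic point of $B_i$, so the generic fiber $C_i$ of $X_i^\nu \to B_i$ is $\Gamma_i^\nu$ viewed over $\kappa_i^\nu$ and satisfies $H^0(C_i, \Oc_{C_i}) = \kappa_i^\nu$. By Lemma~\ref{lem:fibrationa-irreducible}, $C_i$ is geometrically irreducible over $\kappa_i^\nu$, hence the general fiber of $X_i^\nu \to B_i$ is irreducible by Lemma~\ref{lem:geom-generic-fiber}\eqref{part:irred-fibers}. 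The genus bound $p_a(C_i/\kappa_i^\nu) \le p_a(X_\eta/\kappa(\eta))$ is immediate from the inequality established above.
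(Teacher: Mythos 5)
Your proof follows the same route as the paper's: localize $Y$ at the generic point $\xi$ of $Y_\kappa$ to get the DVR $D = \Oc_{Y,\xi}$, note that $X_D \to \Spec D$ is a normal degeneration of curves, and apply Proposition~\ref{prop:normal_inequality}; part~\eqref{item:X_Ynoteta-decomposition} is handled the same way (localization/flat base change). The one place you go beyond the paper's proof is in flagging that Proposition~\ref{prop:normal_inequality} carries the hypothesis $H^0(X_K, \Oc_{X_K}) = K$, which in the present setting reads $H^0(X_\Yeta, \Oc_{X_\Yeta}) = \kappa(\Yeta)$, i.e.\ that $\kappa(\Yeta)$ is algebraically closed in $\kk(X)$; this does not follow formally from the stated hypotheses of the corollary, and the paper's proof applies the proposition without comment. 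Your Stein-factorization workaround --- replacing $D$ by a localization $R''$ of $H^0(X', \Oc_{X'})$, applying the proposition to $X' \to \Spec R''$, and then using $p_a(X_\Yeta/\kappa(\Yeta)) = [\Frac R'' : \kappa(\Yeta)]\, p_a(X_\Yeta/\Frac R'') \ge p_a(X_\Yeta/\Frac R'')$ --- is correct, with the small caveat that one must localize $R''$ at the maximal ideal lying over $\m_D$ for which the given component $\Gamma_i$ appears in the special fiber of $X' \to \Spec R''$ (the $\kappa_i$ and $\Gamma_i$ themselves do not change). It is worth noting that in the paper's only application (Proposition~\ref{prop:fg-specializes}), the generic fiber is the normalization of a graph over a characteristic-zero function field with geometrically irreducible fibers, so the $H^0$ hypothesis holds automatically there and the gap is harmless in context; still, as a proof of the corollary as stated, your version is the more complete one.
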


\begin{proof}

Part~\eqref{item:X_Ynoteta-decomposition} follows from properties of localization and primary decomposition. For~\eqref{item:X_Ynoteta-genus}, first note that since \(Y\) is normal, $D \coloneqq \iO_{Y,\Ynoteta}$ is a DVR.
Now consider $X_D \to \Spec{D}$. Since \(X_D\) is a localization of the irreducible and normal scheme \(X\), it is also irreducible and normal. The morphism \(X_D\to\Spec D\) is proper by base change, and it is flat since it is dominant. Therefore, as $\Gamma_i^\nu$ and $C_i$ are isomorphic as curves over $\kappa_i^\nu\cong \kk(B_i)$, the genus inequalities in part~\eqref{item:X_Ynoteta-genus} follow from Proposition~\ref{prop:normal_inequality}. The statement about irreducibility of the general fibers of the Stein factorization follows from Lemma~\ref{lem:fibrationa-irreducible} and Lemma~\ref{lem:geom-generic-fiber}\eqref{part:irred-fibers}.
\end{proof}

Finally, we are ready to show that fibering genus can only drop under specialization.
We note that this property is known for covering gonality by \cite[Proposition 2.2]{GounelasKouvidakis}; however, their argument, which uses the Kontsevich moduli space, requires a base change in applying the valuative criterion of properness for stacks, and so it will not work for fibering genus.

\begin{proposition}\label{prop:fg-specializes}
Let $R$ be an excellent DVR as above, and let $\Xc$ be a normal proper integral scheme that is flat over \(R\).
\begin{enumerate}
\item\label{item:fg-specializes} If $\fg(\Xc_{K}) = g$, then for every component $\Xc_{\kappa}' \subset \Xc_{\kappa}$, we have $\fg(\Xc_{\kappa}') \leq g$.
\item\label{item:geometric-fg-specializes} Suppose $\Xc_{\kappa}'\subset \Xc_\kappa$ is a geometrically integral component of the central fiber that appears with multiplicity \(1\). If \(\Xc_K\) is geometrically integral with $\fg(\Xc_{\bar{K}}) = g$, then $\fg(\Xc_{\kapbar}') \leq g$.
\end{enumerate}
\end{proposition}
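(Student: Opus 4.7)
The plan is to spread the fibration realizing $\fg(\Xc_K) = g$ to a morphism over $R$, and then apply Corollary~\ref{cor:SpecializingRelDim1DVR} at the generic point of a suitably chosen component of the special fiber of the base.

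For part~\eqref{item:fg-specializes}, let $\pi_K\colon V_K \to B_K$ realize $\fg(\Xc_K) = g$. First spread $B_K$ to a proper normal integral $R$-scheme $\mathcal B$ (via Nagata compactification and normalization). The rational map $\Xc \dashrightarrow \mathcal B$ induced by $\pi_K$ on the generic fiber extends, after taking the closure of its graph in $\Xc \times_R \mathcal B$ and normalizing, to a morphism $f\colon \widetilde{\mathcal X} \to \mathcal B$, where $\widetilde{\mathcal X}$ is a normal proper integral model of $\Xc$ and $f$ has generic relative dimension $1$. To guarantee that every component of the special fiber $\widetilde{\mathcal X}_\kappa$ dominates a component of $\mathcal B_\kappa$ under $f$, apply Raynaud--Gruson flattening to $f$ and renormalize. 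This makes $f$ equidimensional of relative dimension $1$, so by dimension counting each $(\dim \Xc_K)$-dimensional component of $\widetilde{\mathcal X}_\kappa$ must surject onto a $(\dim \Xc_K - 1)$-dimensional component of $\mathcal B_\kappa$.

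Now fix a component $\Xc' \subset \Xc_\kappa$, let $\widetilde{X}'$ be the (unique) component of $\widetilde{\mathcal X}_\kappa$ birational to $\Xc'$, and let $B_1 = f(\widetilde{X}') \subset \mathcal B_\kappa$, a component with generic point $\xi_1$. Since $\mathcal B$ is normal and $\xi_1$ has codimension $1$, the local ring $\Oc_{\mathcal B, \xi_1}$ is a DVR. Apply Corollary~\ref{cor:SpecializingRelDim1DVR} to the base change $\widetilde{\mathcal X} \times_{\mathcal B} \Spec \Oc_{\mathcal B, \xi_1} \to \Spec \Oc_{\mathcal B, \xi_1}$: the Stein factorization produced by the corollary yields a fibration of the normalization of $\widetilde{X}'$ (which is a normal proper model of $\Xc'$) whose generic fiber is a geometrically irreducible curve of arithmetic genus at most $g$. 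Hence $\fg(\Xc') \le g$.

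For part~\eqref{item:geometric-fg-specializes}, the fibration realizing $\fg(\Xc_{\bar K}) = g$ descends to some finite extension $L/K$ inside $\bar K$. Let $R_L$ be the localization of the integral closure of $R$ in $L$ at a prime above $\mathfrak{m}$, an excellent DVR with residue field $\kappa_L$. Apply part~\eqref{item:fg-specializes} to $\Xc \times_R \Spec R_L \to \Spec R_L$: since $\Xc_\kappa'$ is geometrically integral and of multiplicity one, the base change $\Xc_\kappa' \otimes_\kappa \kappa_L$ is an integral multiplicity-one component of $(\Xc \times_R \Spec R_L)_{\kappa_L}$, so its fibering genus is at most $g$. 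Since fibering genus is non-increasing under algebraic base change of the ground field, we conclude $\fg(\Xc_{\bar\kappa}') \le g$. The main subtlety is ensuring that the component of $\widetilde{\mathcal X}_\kappa$ birational to $\Xc'$ is not contracted by $f$ to a lower-dimensional subvariety of $\mathcal B_\kappa$; this is precisely what flattening (or an equivalent resolution of the DVR valuation $\Oc_{\Xc, \eta'}$ attached to the generic point $\eta'$ of $\Xc'$) provides.
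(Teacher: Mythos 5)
Your proof follows the same overall strategy as the paper: spread the fibration to a rational map over $R$, pass to the normalized graph, modify the base so that the component of interest maps onto a divisor, and then apply Corollary~\ref{cor:SpecializingRelDim1DVR} at the corresponding codimension-one point. The key difference is the tool used to ensure the component over $\Xc_\kappa'$ is not contracted: you invoke Raynaud--Gruson flattening (plus renormalization), whereas the paper uses the Abhyankar--Zariski lemma \cite[Lemma 2.22]{KollarSingsMMP} applied to the divisorial valuation at the generic point $\delta$ of $\Xc_\kappa'$. Both achieve the same end, but the paper's route is lighter: the Abhyankar--Zariski lemma directly produces a birational model $\tilde B\to B$ on which the center of the valuation $v_\delta\vert_{\kk(B)}$ is a regular codimension-one point, whereas your approach requires a global flattening theorem and then an argument (which you do not spell out) that the strict transform followed by normalization still contains a component birational to $\Xc_\kappa'$. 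That fact does hold -- the strict transform maps properly and birationally onto the normal scheme $\widetilde{\mathcal X}$, hence is an isomorphism over all codimension-one points -- but you should say so, since it is the crux of the step.

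There is a genuine gap in your treatment of part~\eqref{item:geometric-fg-specializes}: you apply part~\eqref{item:fg-specializes} directly to $\Xc\times_R\Spec R_L$, but that scheme need not be normal (over a ramified extension it typically fails $R_1$). The paper instead passes to the normalization $\tilde\Xc$ of the base change, and it is precisely here that the hypotheses ``geometrically integral'' and ``multiplicity one'' are used: they guarantee that $\Oc_{\Xc,\eta'}\otimes_R R_L$ is already regular at the generic point of $\Xc_\kappa'\otimes_\kappa\kappa_L$, so that component survives the normalization. You also conclude by invoking that the fibering genus is non-increasing under an algebraic extension of the ground field; this is plausible for separable extensions but is not established anywhere in the paper and is not obviously innocuous when $\bar\kappa/\kappa_L$ has an inseparable part. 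Both of these should be filled in to match the level of rigor of the paper's argument.
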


\begin{proof}
Set $\fg(\Xc_{K}) = g$, and let $f_{K} \colon \Xc_{K} \dashrightarrow B_{K}$ be a rational map computing the fibering genus of $\Xc_{K}$, i.e. the generic fiber of the normalization of the graph of $f_{K} \colon \Xc_K \dashrightarrow B_K$ is geometrically irreducible of arithmetic genus $g$. Let $B$ be a normal proper model of $B_K$ over $R$. The rational map $f_{K}$ extends to a rational map $f \colon \Xc \dashrightarrow B$ over $R$. Since $\Xc$ is normal, $f$ is defined on all codimension 1 points. By an argument of Abhyankar and Zariski \cite[Lemma 2.22]{KollarSingsMMP}, for any codimension 1 point $\delta \in \Xc$, there is a birational morphism $\mu \cl \tilde{B} \ra B$ so that for the induced rational map
\[
\tilde{f} \colon \Xc \dra \tilde{B},
\]
$\tilde{f}(\delta)$ is a codimension 1 regular point of $\tilde{B}$. Let $\delta$ now be the generic point of $\Xc_\kappa'$. Then the closure of the image $B_\kappa' \coloneqq \tilde{f}(\delta) \subset \tilde{B}_\kappa$ is a codimension 1 subscheme of $\tilde{B}$.

Now we use a standard argument to pass to morphisms. Consider the normalization of the closure of the graph of $\tilde{f}$, which we will denote by $\Gamma$. This admits a morphism
\[ \Gamma \rightarrow \Xc \times_{R} \tilde{B} \]
over $R$, with the projection maps $\pi_{1}, \pi_{2}$. Furthermore, the generic fiber of \(\Gamma\to B\) is the normalization of the graph of \(f_K\). The first projection $\pi_{1}$ is proper and is an isomorphism away from a codimension $\geq 2$ locus in $\Xc$. The central fiber $\Gamma_{\kappa}$ has pure codimension 1, and since $\pi_{1}$ is an isomorphism away from a codimension $\geq 2$ locus in $\Xc$, there is a unique component $\Gamma_\kappa'$ which dominates $\Xc_\kappa'$ via the first projection with degree one. Therefore, $\Gamma_{\kappa}'$ is birational to $\Xc_\kappa'$.

This defines a morphism $\Gamma \rightarrow \tilde{B}$ which restricts to $\Gamma_\kappa' \rightarrow B_\kappa'$ satisfying all of the conditions in Corollary~\ref{cor:SpecializingRelDim1DVR}. Applying Corollary~\ref{cor:SpecializingRelDim1DVR} to $\Gamma \rightarrow \tilde{B}$ over $R$ gives the desired inequality on fibering genera (noting that the normalization of the generic fiber of \(\Gamma'_\kappa\to B'_\kappa\) is the generic fiber of the morphism \({\Gamma'_\kappa}^\nu\to B'_\kappa\) from the normalization \cite[\href{https://stacks.math.columbia.edu/tag/0307}{Tag 0307}]{stacks-project}). This proves part~\eqref{item:fg-specializes}.

Part~\eqref{item:geometric-fg-specializes} follows from part~\eqref{item:fg-specializes} by base changing $\Xc$ by a (possibly ramified) finite extension \(\tilde{R}\) of the DVR $R$ and applying part~\eqref{item:fg-specializes} to the normalization \(\tilde{\Xc}\) of the base change, as in the proof of \cite[Theorem IV.1.6]{KollarRationalCurves}. The assumptions on \(\Xc_\kappa'\) are to ensure that \(\tilde{\Xc}_{\tilde{\kappa}}\) has a component birational to \((\Xc_\kappa')_{\tilde{\kappa}}\).
\end{proof}

\section{Lower bounds on fibering genus for Fano hypersurfaces}

The main result of this section is Theorem~\ref{thm:fib_genus_inequality}, which will imply Theorem~\ref{thm:fibering-genus} and Corollary~\ref{cor:explicit_bound}.

\begin{definition}
    Let \(X\) be a variety over an algebraically closed field \(k\), and let \(\L\) be a line bundle on \(X\). We say that sections of \(\L\) \defi{separate \(\gamma\) points on an open set \(U\subset X\)} if for any \(\gamma\) distinct points \(x_1,\ldots,x_\gamma\in U\), there is a section \(s\in H^0(X,\L)\) that vanishes on \(x_1,\ldots,x_{\gamma-1}\) but not on \(x_\gamma\).
\end{definition}

This is slightly different from the notion of \((\mathrm{BVA})_p\) as in \cite[Definition 1.1]{BDELU}. For example, a line bundle \(\Lc\) is \((\mathrm{BVA})_1\) if and only if it separates 2 points and tangent vectors on an open set. For our purposes, we only require our line bundle to separate \(\gamma\) distinct points.

\begin{lemma}\label{lem:SepPointsFibGon} Fix an integer $\gamma \geq 2$, and let $f \colon X \to B$ a proper morphism of relative dimension $1$ of normal varieties over an algebraically closed field $k$, and assume the generic fiber is a smooth geometrically connected curve over \(\kk(B)\). Let $n = \dim{X}$. Suppose there is a line bundle $\L$ embedding in $\Omega_X^{k}$ for some $1\leq k \leq n$
\[
\varphi : \L \embed \Omega_X^{k}
\] 
whose sections separate $\gamma$ points on an open set. Then the general fiber of $f$ has gonality $\geq \gamma + 1$.
\end{lemma}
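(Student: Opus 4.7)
The plan is to argue by contradiction: assume a general fiber $C$ admits a map $\phi \colon C \to \PP^1$ of degree $d \leq \gamma$ and derive a contradiction by applying the trace map for $1$-forms along $\phi$.

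The first step uses the relative cotangent sequence $0 \to f^*\Omega_B \to \Omega_X \to \Omega_{X/B} \to 0$ on the $f$-smooth locus. Since $\Omega_{X/B}$ is a line bundle there, the filtration by wedge powers of $f^*\Omega_B$ on $\Omega_X^k$ has only two nontrivial graded pieces, yielding
\[ 0 \to f^*\Omega_B^k \to \Omega_X^k \to f^*\Omega_B^{k-1} \otimes \Omega_{X/B} \to 0. \]
I would first check that $\varphi \colon \L \hookrightarrow \Omega_X^k$ does not factor through $f^*\Omega_B^k$. Otherwise, restricting to a general fiber $C$ exhibits $\L|_C$ as a line subbundle of the trivial bundle $(\Omega_B^k)_{f(C)} \otimes \iO_C$; any nonzero global section of such a subbundle comes from a constant section of the trivial ambient bundle, and so is everywhere nonvanishing on $C$. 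This contradicts the separation hypothesis applied to two distinct points of the (dense) open subset $C \cap U$, using $\gamma \geq 2$.

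Thus the composition $\psi \colon \L \hookrightarrow \Omega_X^k \twoheadrightarrow f^*\Omega_B^{k-1} \otimes \Omega_{X/B}$ is nonzero, and restricting to a general fiber yields a nonzero sheaf map
\[ \psi_C \colon \L|_C \longrightarrow V \otimes \Omega_C, \qquad V \coloneqq (\Omega_B^{k-1})_{f(C)}. \]
For each $\lambda \in V^*$, composing yields $\psi_\lambda \colon \L|_C \to \Omega_C$ and hence a $1$-form $\omega_{s,\lambda} \coloneqq \psi_\lambda(s|_C) \in H^0(C, \Omega_C)$ for every $s \in H^0(X,\L)$. By construction $\omega_{s,\lambda}(q) = 0$ whenever $s(q) = 0$; conversely, if $s(q) \neq 0$ and $q$ lies outside the finite zero locus of $\psi_C$ on $C$, then $\psi_C(s|_C)(q) \neq 0$, so some $\lambda$ witnesses $\omega_{s,\lambda}(q) \neq 0$.

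For the final step, fix generic $t \in \PP^1$ so that $\phi^{-1}(t) = \{q_1, \dots, q_d\}$ consists of distinct points of $C \cap U$ at which $\phi$ is étale and which avoid the zero locus of $\psi_C$. If $d<\gamma$, pad with $\gamma - d$ auxiliary points $p_d, \dots, p_{\gamma-1} \in U$ disjoint from the $q_i$, and apply the separation hypothesis to $q_1, \dots, q_{d-1}, p_d, \dots, p_{\gamma-1}, q_d$ (with $q_d$ distinguished) to obtain $s \in H^0(X, \L)$ vanishing on the first $\gamma-1$ points and with $s(q_d) \neq 0$. Pick $\lambda$ with $\omega \coloneqq \omega_{s,\lambda}$ satisfying $\omega(q_d) \neq 0$. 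Since $H^0(\PP^1, \Omega_{\PP^1}) = 0$, the trace $\operatorname{Tr}_\phi(\omega)$ vanishes; evaluating near $t$ using étaleness, this forces $\sum_{i=1}^d \omega(q_i) = 0$ after identifying each $\Omega_{C,q_i}$ with $\Omega_{\PP^1, t}$ via $\phi$. As $\omega$ vanishes at $q_1, \dots, q_{d-1}$, we conclude $\omega(q_d) = 0$, contradicting the choice of $\lambda$. Hence $\gon(C) \geq \gamma + 1$. The main technical subtlety is the first step, ruling out the horizontal case $\L \subset f^*\Omega_B^k$; thereafter the argument is the standard trace-along-a-pencil technique.
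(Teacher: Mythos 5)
Your proof is correct, and its overall strategy mirrors the paper's. Both proofs begin by producing, from the embedding $\L\hookrightarrow\Omega_X^k$, a nonzero map $\L|_C\to\Omega_C$ on a general smooth fiber $C$: the paper does this by restricting to $C$ and using the conormal sequence of $C\subset X$ together with the triviality of $\Ic_C/\Ic_C^2\cong\Oc_C^{n-1}$, while you use the filtration of $\Omega_X^k$ coming from the relative cotangent sequence of $f$; since $(f^*\Omega_B)|_C$ is exactly the conormal bundle, these are literally the same decomposition. Your treatment of the ``horizontal'' degenerate case (where $\L$ lands in $\mybigwedge^k(f^*\Omega_B)$) is also the paper's: a nonzero section of $\L|_C$ forces $\L|_C\cong\Oc_C$ with nowhere-vanishing section, contradicting separation of $\gamma\geq 2$ points.

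Where you genuinely diverge is the endgame. The paper observes that the induced injection $H^0(C,\L|_C)\hookrightarrow H^0(C,\Omega_C)$ lets the canonical system separate $\gamma$ points of $C\cap U$, and then invokes geometric Riemann--Roch in the form of \cite[Lemma 1.3]{BDELU}. You instead run the trace-along-a-pencil argument from scratch: given a degree-$d$ map $\phi\colon C\to\PP^1$ with $d\leq\gamma$, you build a $1$-form $\omega$ vanishing on $d-1$ points of an \'etale fiber of $\phi$ but not the $d$-th, then derive a contradiction from $\operatorname{Tr}_\phi(\omega)\in H^0(\PP^1,\Omega_{\PP^1})=0$. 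This is a self-contained substitute for the citation (the trace argument is essentially the standard proof of that lemma). Since the lemma is used in positive characteristic, it is worth noting two points your sketch leaves implicit but that do hold: the gonality-minimizing map is separable (an inseparable map of minimal degree would factor through a relative Frobenius $C\to C^{(p^m)}$, and since $k$ is algebraically closed $C^{(p^m)}\cong C$ as an abstract curve, contradicting minimality), so \'etale fibers exist; and the Grothendieck-duality trace sends regular forms to regular forms in any characteristic, so $\operatorname{Tr}_\phi(\omega)$ really lies in $H^0(\PP^1,\Omega_{\PP^1})$. Finally, a small imprecision: $\L|_C\to V\otimes\Oc_C$ and $\L|_C\to V\otimes\Omega_C$ are a priori only injections of sheaves, not subbundles, and a general fiber $C$ can meet the degeneracy loci in finitely many points; your argument accommodates this (one just needs a nonzero composite projection, and you already arrange $q_d$ to avoid the zero locus of $\psi_C$), but ``line subbundle'' overstates what is known.
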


\begin{proof}
Since \(X\) is normal and \(f\) has relative dimension \(1\), its singular locus does not dominate \(B\), i.e. a general fiber of \(f\) is contained in the regular locus of \(X\). Thus,
we can shrink $B$ so that $f$, $X$, and $B$ are all smooth, and any fiber \(C\) is a proper, smooth, irreducible curve. Fix some fiber $C$ that meets the open set $U$ on which $\L$ separates $\gamma$ points.
Then the conormal sequence
\[ 0 \to \Ic_C/\Ic_C^2 \to \Omega_{X}|_C \to \Omega_C \to 0 \]
is an exact sequence of vector bundles, and since $\Omega_C$ is a line bundle, there is an induced exact sequence of vector bundles
\[ 0 \to \mybigwedge^{k}(\Ic_C/\Ic_C^2) \to \Omega^{k}_X|_C \to (\mybigwedge^{k-1} (\Ic_C/\Ic_C^2)) \otimes \Omega_C \to 0. \]
However, since $C$ is a fiber of $f$ we have $\Ic_C/\Ic_C^2 \cong \Oc_{C}^{n-1}$ and hence $\mybigwedge^{k-1}(\Ic_C / \Ic_C^2) \cong \Oc_{C}^{{n-1 \choose k-1}}$. Therefore, we get ${n - 1 \choose k-1}$ projection maps:
\[ \L |_{C} \to \Omega_X^{k}|_C \to \Omega_C. \]
The projection maps are all zero exactly if $\L |_{C} \embed \Omega_X^{k} |_{C}$ factors through $\mybigwedge^{k} (\Ic_C/\Ic_C^2) \cong \Oc_{C}^{{n-1 \choose k}}$ (when $k = n$, this just says that the projection map is nonzero). In this case, we could only have $h^{0}(C, \L |_{C}) \not= 0$ if $\L |_{C} \cong \Oc_{C}$, but that would imply that sections of $\L$ cannot separate more than one point on $C$, contradicting the assumption that $\gamma \ge 2$. Therefore, one of the projections $\L|_{C} \to \Omega_C$ must be a nonzero map of line bundles and hence injective, meaning that
\[ H^0(C, \L |_{C}) \to H^0(C, \Omega_C) \]
must be injective. Since we chose $C$ to be general, $H^0(C, \L |_{C})$ and hence $H^0(C, \Omega_C)$ can separate $\gamma$ points in some open $(U \cap C) \subseteq C$. Therefore $\gon(C) \geq \gamma + 1$ by geometric Riemann--Roch (c.f. \cite[Lemma 1.3]{BDELU}).
\end{proof}

In the proof of Theorem~\ref{thm:fib_genus_inequality}, we will use a construction of Mori to degenerate a very general hypersurface to a \(\mu_p\)-cover. We recall several key facts that Koll\'ar showed about this family:

\begin{construction}[{\cite{Kollar95}, \cite[Construction V.5.14.4]{KollarRationalCurves}}]\label{construction:degeneration}
Let \(p\) be a prime, and let \(n,e\) be positive integers with \(n\geq 3\). Let \(R\) be a mixed characteristic DVR with algebraically closed residue field \(\kappa\) of characteristic \(p\). There is a normal proper integral scheme \(\Xc\to \Spec R\) with the following properties:
\begin{enumerate}
    \item The generic fiber \(\Xc_K\) is a hypersurface of degree \(pe\) in \(\PP^{n+1}_K\).
    \item The special fiber is a \(\mu_p\)-cover \(\Xc_\kappa \to Y_e\) of a smooth degree \(e\) hypersurface in \(\PP^{n+1}_\kappa\). Moreover, \(\Xc_\kappa\) admits a resolution of singularities \cite[paragraph 21]{Kollar95}
    \[\begin{tikzcd} \Xc'_\kappa \arrow[r]\arrow[rr,bend left=-30,swap,"r"] & \Xc_\kappa \arrow[r] & Y_e, \end{tikzcd} \]
    and the line bundle $L\coloneqq{r}^{\ast}\Oc_{\PP^{n+1}_{\kappa}}(pe+e-n-2)$ injects into $\mybigwedge^{n-1} \Omega_{\Xc'_\kappa}$ \cite[line (15.3)]{Kollar95}.
\end{enumerate}
\end{construction}

We now prove the main result: a lower bound on fibering genus.

\begin{theorem}\label{thm:fib_genus_inequality}
Let $X_{n,d} \subset \PP^{n+1}_{\CC}$ be a very general hypersurface of degree $d$ and dimension \(n\geq 3\). For any prime $p$ and positive integer $e$ such that $pe \le d$, define the quantity $\gamma \coloneqq pe+e-n-1$. If $\gamma\ge 2$, then
\[ \fg(X_{n,d}) \ge \min \left\{ \frac{p-2}{2}, 2\gamma -1 \right\}. \]
\end{theorem}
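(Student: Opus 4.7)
The strategy is to combine the specialization of fibering genus (Proposition~\ref{prop:fg-specializes}) with Koll\'ar's characteristic-\(p\) construction (Construction~\ref{construction:degeneration}), Lemma~\ref{lem:SepPointsFibGon}, and Tate's genus change formula (Theorem~\ref{thm:tate-genus-change}). First, I would produce a degeneration of the very general \(X_{n,d}\) over a mixed-characteristic DVR with algebraically closed residue field \(\kappa=\Fbar\), whose special fiber contains a geometrically integral multiplicity-one component birational to the smooth resolution \(\Xc'_\kappa\) of the \(\mu_p\)-cover appearing in Construction~\ref{construction:degeneration}. Since \(pe\leq d\), this can be arranged in two stages: first, degenerating \(X_{n,d}\) to a reducible union \(V(f_{pe})\cup V(f_{d-pe})\) of hypersurfaces of degrees \(pe\) and \(d-pe\); second, applying Construction~\ref{construction:degeneration} to the degree-\(pe\) component. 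Proposition~\ref{prop:fg-specializes}\eqref{item:geometric-fg-specializes} then yields
\[\fg(X_{n,d})\ge \fg(\Xc_\kappa)=\fg(\Xc'_\kappa),\]
the last equality being birational invariance of fibering genus.

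Next, set \(g_0\coloneqq \fg(\Xc'_\kappa)\), realized by some fibration \(\tilde Z\to B\) with \(\tilde Z\) a normal proper model birational to \(\Xc'_\kappa\) and generic fiber \(C\) a geometrically irreducible proper curve of arithmetic genus \(g_0\) over \(\kk(B)\). I would split into two cases depending on whether Tate's genus change formula applies. If \(p<2g_0+3\), then \(g_0>(p-3)/2\), so \(g_0\ge (p-1)/2\ge (p-2)/2\). Otherwise \(p\ge 2g_0+3\); since \(C\) is the generic fiber of a morphism from a normal variety it is itself normal, hence regular (being a curve), and Theorem~\ref{thm:tate-genus-change} forces \(C\) to be smooth over \(\kk(B)\).

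To conclude in the smooth case, pass to a common smooth birational model \(W\) of \(\tilde Z\) and \(\Xc'_\kappa\), resolve the indeterminacy of \(W\dashrightarrow B\), and take Stein factorization if needed to obtain a morphism \(W\to B\) of relative dimension one with geometrically irreducible generic fiber birational to \(C\). The embedding \(L\coloneqq r^{\ast}\Oc_{\PP^{n+1}}(pe+e-n-2)\hookrightarrow \bigwedge^{n-1}\Omega_{\Xc'_\kappa}\) from Construction~\ref{construction:degeneration} pulls back to an embedding \(\tilde L\hookrightarrow \bigwedge^{n-1}\Omega_W\), using that \(f^\ast\Omega_{\Xc'_\kappa}\to\Omega_W\) is injective for any birational morphism \(f\) of smooth varieties. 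The line bundle \(\Oc_{\PP^{n+1}}(\gamma-1)\) separates any \(\gamma\) distinct points (via a product of hyperplanes, each passing through one of the first \(\gamma-1\) points but avoiding the \(\gamma\)-th), and the composite \(W\dashrightarrow \Xc'_\kappa\to Y_e\hookrightarrow \PP^{n+1}\) is generically injective since the \(\mu_p\)-cover is purely inseparable (a set-theoretic bijection); hence \(\tilde L\) separates \(\gamma\) points on an open subset of \(W\). Applying Lemma~\ref{lem:SepPointsFibGon} with \(k=n-1\) then shows that the general fiber of \(W\to B\) has gonality at least \(\gamma+1\). Since this fiber is birational to the smooth curve \(C\) of genus \(g_0\), the inequality \(\gon(C)\le \lfloor(g_0+3)/2\rfloor\) from~\eqref{eqn:gon-genus-inequality} forces \(g_0\ge 2\gamma-1\). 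Combining the two cases gives \(g_0\ge \min\{(p-2)/2,\ 2\gamma-1\}\), as required.

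The main obstacle I anticipate is the careful execution of the first step: producing a degeneration where Proposition~\ref{prop:fg-specializes}\eqref{item:geometric-fg-specializes} applies, meaning the relevant component of the special fiber appears with multiplicity one and is geometrically integral over \(\kappa\), while the generic fiber faithfully represents a very general complex degree-\(d\) hypersurface. The remaining steps are standard manipulations of birational models, Stein factorizations, and pullbacks of line-bundle embeddings into sheaves of differentials.
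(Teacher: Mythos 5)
Your proposal follows the same strategy as the paper's own proof: degenerate to Koll\'ar's $\mu_p$-cover via Construction~\ref{construction:degeneration}, transfer a hypothetical low-genus fibration into characteristic $p$ using Proposition~\ref{prop:fg-specializes}, invoke Tate's genus-change formula (Theorem~\ref{thm:tate-genus-change}) to get a smooth generic fiber, and then apply Lemma~\ref{lem:SepPointsFibGon} with the $(n-1)$-form from Construction~\ref{construction:degeneration} to bound the gonality and hence the genus via~\eqref{eqn:gon-genus-inequality}. Your explicit case split on $p < 2g_0+3$ versus $p \geq 2g_0+3$ is just the contrapositive of the paper's formulation, and your two-stage degeneration for $d > pe$ is the paper's final paragraph. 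Two places need tightening. First, you pass to a ``common smooth birational model $W$'' of $\tilde Z$ and $\Xc'_\kappa$ and use that $f^*\Omega_{\Xc'_\kappa}\hookrightarrow\Omega_W$ for a birational morphism of \emph{smooth} varieties; but resolution of singularities is not available in characteristic $p$ in dimension $\geq 4$, so such a smooth $W$ need not exist. The paper instead feeds Lemma~\ref{lem:SepPointsFibGon} the normalization of the graph of the rational fibration, which is only normal: that suffices because Lemma~\ref{lem:SepPointsFibGon} assumes only normality (shrinking $B$ until the relevant locus is smooth), and the pulled-back map $\pi^*\L \to \Omega^{n-1}$ is still injective since its kernel is generically zero and hence a torsion subsheaf of a line bundle, i.e.\ zero. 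Second, to apply Theorem~\ref{thm:tate-genus-change} to the generic fiber $C$ you need $H^0(C,\Oc_C)=\kk(B)$; geometric irreducibility alone does not give this (one could have a nontrivial purely inseparable extension). The paper arranges it through the Stein factorization in Corollary~\ref{cor:SpecializingRelDim1DVR}; in your setup you should first replace $\tilde Z\to B$ by its Stein factorization, which by Lemma~\ref{lem:fibrationa-irreducible} preserves geometric irreducibility and (since $\fg$ is minimal) preserves the genus, before invoking Tate.
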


\begin{proof}
First we consider the case \(d=pe\). By Construction~\ref{construction:degeneration}, a very general degree \(pe \) hypersurface \(X_{n,pe}\subset\PP^{n+1}_{\CC}\) admits a degeneration to \(\Xc_\kappa\) with a resolution of singularities \(\Xc'_\kappa\to\Xc_\kappa\). The line bundle $L = {r}^{\ast}\Oc_{\PP^{n+1}_{\kappa}}(\gamma-1)$ injects into $\mybigwedge^{n-1} \Omega_{\Xc'_\kappa}$ and separates $\gamma$ points on an open set of $\Xc'_\kappa$. If $\Xc'_\kappa$ birationally admits a fibration in curves such that the general fiber \(C\) is smooth, then Lemma~\ref{lem:SepPointsFibGon} (applied to the normalization of the graph of the rational fibration of $\Xc'_\kappa$ and \(k=n-1\)) implies that it has gonality $\gon(C) \geq \gamma + 1$. Here we use the assumption $\gamma \geq 2$ here.

We will now obstruct low genus fibrations of $X_{n,pe}$. Assume for contradiction that \(X_{n,pe}\) birationally admits a fibration by irreducible genus \( g \) curves for some $g < \frac{p-2}{2}$. Using the degeneration of $X_{n,d}$ to $\Xc_\kappa$ in characteristic \(p\) (Construction~\ref{construction:degeneration}), we apply Proposition~\ref{prop:fg-specializes} to conclude that \(\Xc_\kappa\) birationally admits a fibration whose general fibers are irreducible curves of arithmetic genus \( g' \leq g \). The condition that \( p \geq 2g + 3 \geq 2 g'+3 \) implies that the general fiber \(C\) is smooth by Theorem~\ref{thm:tate-genus-change} and Lemma~\ref{lem:geom-generic-fiber}\eqref{part:normal-etc-fibers}. Therefore,
\begin{equation}\label{eqn:gonality-inequality-in-proof} \gamma + 1 \leq \gon(C) \leq \floor{\frac{g+3}{2}}, \end{equation}
and hence,
\[ g \ge 2(\gamma + 1) - 3 = 2\gamma - 1 \]
proving the claim in the case \(d=pe\).

For \(d>pe\), we degenerate a very general \(X_{n,d}\) to the union of a very general \(X_{n,pe}\) and \(d-pe\) hyperplanes. Proposition~\ref{prop:fg-specializes} then implies that \[\fg(X_{n,d}) \geq \fg(X_{n,pe}) \ge \min \left\{ \frac{p-2}{2}, 2\gamma -1 \right\}.\]
\end{proof}

Figure~\ref{fig:fg} gives a graphical representation of the lower bound from Theorem~\ref{thm:fib_genus_inequality}.

\begin{proof}[Proof of Theorem~\ref{thm:fibering-genus}]

Define \(r \coloneqq \lfloor\tfrac{g+3}{2}\rfloor\) and \( e \coloneqq \lceil \tfrac{n + r + 1}{p+1}\rceil \). We have $r \ge 2$ by the assumption that $g \geq 1$. Then
\[ \gamma = e (p+1) - n - 1 \geq n + r + 1 - n - 1 = r. \]
Suppose for the sake of contradiction that a very general \(X_{n,pe}\) admits a genus $g' \le g$ fibration. Then the proof of Theorem~\ref{thm:fib_genus_inequality} shows that $\Xc_\kappa$, as defined in Construction~\ref{construction:degeneration}, birationally admits a fibration whose general fiber is a smooth (here we use $p \geq 2g+3$) irreducible curve $C$ of genus at most $g'$, and hence $C$ satisfies the inequalities
\[ r+1 \leq \gamma + 1 \leq \gon(C) \leq \floor{\frac{g'+3}{2}} \leq \floor{\frac{g+3}{2}}. \]
This contradicts our choice of $r$. Thus, \( \fg(X_{n,pe}) \geq g + 1 \). The case \(d > pe\) follows from the inequality \(\fg(X_{n,d})\geq\fg(X_{n,pe})\), as in the proof of Theorem~\ref{thm:fib_genus_inequality}.
\end{proof}

\begin{remark}\label{rem:conic-bundles}
    For \(g=0\), taking \(p=3\) and \(r=2\) (instead of \(\lfloor\tfrac{g+3}{2}\rfloor\)) in the proof of Theorem~\ref{thm:fibering-genus} recovers Koll\'ar's bound ruling out (birational) conic bundle structures in degrees $d \geq 3\lceil \tfrac{n+3}{4}\rceil$.
\end{remark}

\begin{figure}[ht]
\centering
\includegraphics[width=0.8\textwidth]{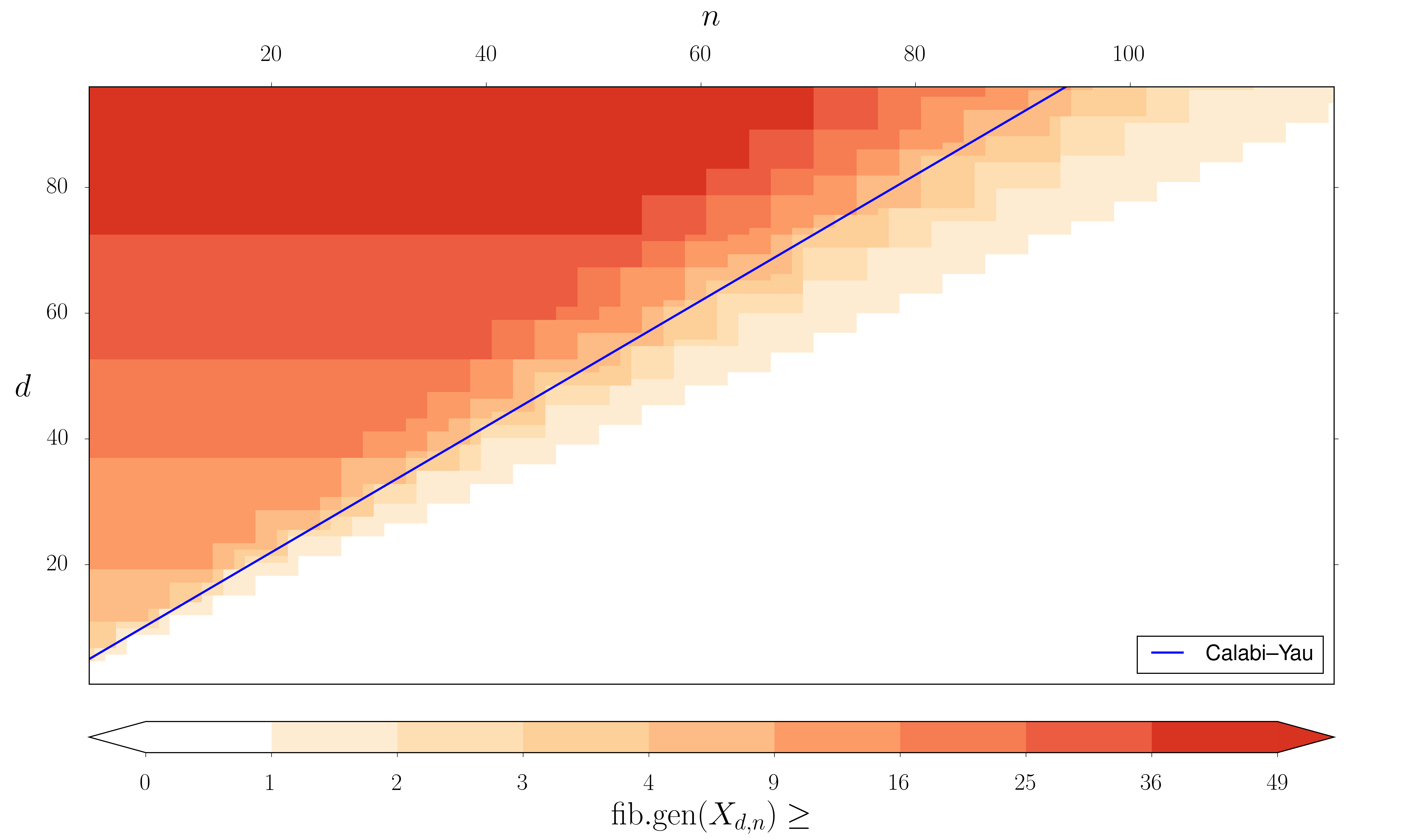}
\caption{Lower bounds on \(\fg(X_{n,d})\) for very general hypersurfaces \(X_{n,d}\subset\PP^{n+1}_{\CC}\) of degree \(d\).}\label{fig:fg}
\end{figure}

\section{Explicit bounds on the fibering genus in terms of the dimension and degree}

In this section, we obtain an explicit lower bound for the fibering genus in terms of degree and dimension, as another consequence of Theorem~\ref{thm:fib_genus_inequality}.

\begin{corollary}\label{cor:explicit_bound}
Let \(X\subset\PP^{n+1}_{\CC}\) be a very general hypersurface of degree \(d\) and dimension \(n \geq 3\). Then
\[ \fg(X) \ge \frac{-\iota + \sqrt{\iota^2 + (9/2) d}}{9} - 1 \qquad \text{ where } \quad \iota = n+2 - d. \]
\end{corollary}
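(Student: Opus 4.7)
The plan is to apply Theorem~\ref{thm:fib_genus_inequality}---which gives \(\fg(X) \geq \min\{(p-2)/2,\, 2\gamma - 1\}\) for any prime \(p\) and positive integer \(e\) with \(pe \leq d\) and \(\gamma := e(p+1) - n - 1 \geq 2\)---by optimizing the choice of \((p,e)\). First, I reformulate the target bound. Setting \(T = \frac{-\iota + \sqrt{\iota^2 + 9d/2}}{9}\), the equation \(9T + \iota = \sqrt{\iota^2 + 9d/2}\) (valid for \(T \geq 0\)) squares to \(d(4T+1) = 18T^2 + 4T(n+2)\). Since the map \(T \mapsto \frac{18T^2 + 4T(n+2)}{4T+1}\) is strictly increasing on \([0,\infty)\), the desired inequality \(\fg(X) \geq T - 1\) is equivalent to the statement: for every integer \(g \geq 1\) satisfying
\[
d(4g+5) \;\geq\; 18(g+1)^2 + 4(g+1)(n+2),
\]
we have \(\fg(X) \geq g\). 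The case \(T - 1 \leq 0\) is trivial since \(\fg(X) \geq 0\) always.

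Fix such an integer \(g\). To deduce \(\fg(X) \geq g\) from Theorem~\ref{thm:fib_genus_inequality}, I must exhibit a prime \(p \geq 2g+2\) (so that \((p-2)/2 \geq g\)) and a positive integer \(e\) with \(\gamma \geq \max\{2, (g+1)/2\}\) (so that \(2\gamma - 1 \geq g\)) and \(pe \leq d\). The ``ideal'' continuous choice \(p = 2(g+1)\) with \(\gamma = (g+1)/2\) balances the two terms in the theorem's minimum and yields \(pe = \frac{(g+1)(2n+g+3)}{2g+3}\). To handle primality, I invoke Bertrand's postulate to find a prime \(p \in [2(g+1),\, 4(g+1)]\); to handle integrality, I set \(e = \lceil (n + 1 + \lceil (g+1)/2\rceil)/(p+1)\rceil\), which guarantees \(\gamma \geq \lceil (g+1)/2\rceil\). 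The elementary estimate \(p\lceil X/(p+1)\rceil \leq X + p - 1\), applied with \(X = n + 1 + \lceil (g+1)/2\rceil\), then supplies an explicit upper bound on \(pe\) in terms of \(g\) and \(n\).

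The main obstacle is the final algebraic verification: I must show that this upper bound on \(pe\) is dominated by \(\frac{18(g+1)^2 + 4(g+1)(n+2)}{4g+5}\), so that the hypothesis on \(d\) implies \(pe \leq d\). The specific constants \(9\) and \(9/2\) appearing in the corollary's formula emerge from this bookkeeping, combining the factor-of-two loss in Bertrand's postulate with the additive loss incurred by rounding \(e\) up to an integer. In the regime where \(g\) is small relative to \(n\) (corresponding to the square-root behavior of the bound when \(d\) is near \(n+2\)), the minimal prime \(p \geq 2g+2\) may not be optimal, and one instead chooses \(p\) somewhat larger so that \(e\) can be smaller, keeping \(pe\) within budget; the corollary's formula is built to accommodate both the linear regime (\(d \gg n\)) and the square-root regime (\(d \approx n+2\)) uniformly. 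Once a valid pair \((p,e)\) is produced in every case, Theorem~\ref{thm:fib_genus_inequality} yields \(\fg(X) \geq g\), completing the proof.
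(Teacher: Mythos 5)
Your approach differs structurally from the paper's, and the difference is where the gap appears. The paper defines a single continuous parameter $\theta = \frac{-\iota + \sqrt{\iota^2 + (9/2)d}}{9/4}$, uses Bertrand to find a prime $p \in [\theta/2,\theta]$, and takes $e = \lfloor d/p \rfloor$ --- the \emph{largest} $e$ consistent with $pe \le d$. This makes $pe \le d$ automatic and pushes $\gamma$ as high as the degree budget allows; the remaining work is one inequality chain showing both $\frac{p-2}{2}$ and $2\gamma - 1$ exceed $\theta/4 - 1$. You instead fix a target genus $g$, take the \emph{smallest} viable prime $p \in [2(g+1), 4(g+1)]$, and the \emph{smallest} $e$ making $\gamma \ge \lceil (g+1)/2 \rceil$, then try to verify $pe \le d$ at the end. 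These are genuinely different optimizations of the free parameters in Theorem~\ref{thm:fib_genus_inequality}.

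The problem is that your verification of $pe \le d$ does not close. Your bound is $pe \le n + r + p \le n + \frac{9g+10}{2}$ (with $r = \lceil (g+1)/2\rceil \le (g+2)/2$ and $p \le 4(g+1)$), and the hypothesis gives $d \ge \frac{18(g+1)^2 + 4(g+1)(n+2)}{4g+5}$. Comparing, one finds
\[
\frac{18(g+1)^2 + 4(g+1)(n+2)}{4g+5} \;-\; \left(n + \frac{9g+10}{2}\right) \;=\; \frac{3(g+1) - 1 - 2n}{2},
\]
which is negative whenever $n > \tfrac{3}{2}g + \tfrac{1}{2}$. Since the corollary is most interesting precisely in the low-index regime where $g \sim \sqrt{n} \ll n$, this fails for essentially every case of interest (already for $g = 1$, $n \ge 3$). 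You do flag this --- ``one instead chooses $p$ somewhat larger so that $e$ can be smaller'' --- but you never make the alternative choice, never re-derive the bound on $pe$, and never verify it against the hypothesis; the final sentence simply asserts success. This is the missing content, and it is exactly the part the paper's greedy choice $e = \lfloor d/p\rfloor$ sidesteps.

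Two smaller issues. First, your reformulation claims an \emph{equivalence} between $\fg(X) \ge T - 1$ and ``$\fg(X) \ge g$ for every integer $g$ with $g+1 \le T$,'' but the latter only yields $\fg(X) \ge \lfloor T - 1\rfloor$, which is strictly weaker when $T$ is not an integer; the reduction goes the wrong way. Second, your choice $r = \lceil (g+1)/2\rceil$ gives $r = 1$ when $g = 1$, so the stated guarantee $\gamma \ge \max\{2, (g+1)/2\}$ is not actually secured by the formula for $e$; you would need $r = \max\{2, \lceil (g+1)/2\rceil\}$.
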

\noindent Note that if \(\iota>0\), then \(X\) is Fano and $\iota$ is the \defi{Fano index} of $X$.

The idea to prove Corollary~\ref{cor:explicit_bound} is to use Bertram's postulate to find a simultaneous lower bound, in terms of \(n\) and \(d\), for both of the quantities in Theorem~\ref{thm:fib_genus_inequality}. This bound is less sharp than Theorem~\ref{thm:fibering-genus} because for specific pairs $(n, d)$, a more optimal prime $p$ can often be chosen to give better bounds from a rough estimate using Bertrand's postulate.

\begin{proof}
    Let \(\theta\) be the unique positive real number satisfying the quadratic equation
    \[ \frac{(\theta/2) - 2}{2} = 2 \left( d - \theta + \frac{d}{\theta} - 1 - n \right) - 3,\] 
    which can explicitly be written as
    \[ \theta = \frac{-\iota + \sqrt{\iota^2 + (9/2) d}}{9/4}. \]
    We need to show that the fibering genus is at least $\frac{\theta}{4}-1$.
    There is nothing to show if $\theta < 2$, so we may assume that $\theta \ge 2$. By Bertrand's postulate, we may choose some prime $p$ satisfying $\tfrac{1}{2} \theta \le p \le \theta$. The first inequality $\tfrac{1}{2} \theta \le p$ implies $\frac{p-2}{2} \geq \frac{(\theta/2) -2}{2}$. Furthermore, if we set $e \coloneqq \floor{\frac{d}{p}}$, then clearly $e \ge \frac{d}{p} - 1$, so the second inequality $p \le \theta$ implies
    \[ 2(pe + e - n) - 3 \ge 2\left(d - p + \frac{d}{p} - 1 - n \right) - 3 \ge 2 \left(d - \theta + \frac{d}{\theta} - 1 - n\right) - 3 = \frac{(\theta/2) - 2}{2}  \]
    Therefore, by Theorem~\ref{thm:fib_genus_inequality}, we have
    \[ \fg(X) \ge \min \left \{ \frac{p-2}{2}, 2(pe + e - n) - 3 \right\} \ge \frac{\theta}{4} - 1. \]
\end{proof}

For Calabi--Yau hypersurfaces, i.e. the case that $d = n+ 2$, Corollary~\ref{cor:explicit_bound} shows:
\begin{corollary}\label{cor:CY-asymptotic}
    Let \(X_{n+2}\subset\PP^{n+1}_{\CC}\) be a very general hypersurface of degree \(n+2\) and dimension \(n\geq 3\). Then \[\fg(X_{n+2}) \geq \frac{1}{3 \sqrt{2}} \sqrt{n + 2} - 1.\]
\end{corollary}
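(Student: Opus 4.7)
The plan is to deduce this directly from Corollary~\ref{cor:explicit_bound} by specializing to the Calabi--Yau case. Setting $d = n+2$, the Fano index becomes $\iota = n + 2 - d = 0$, so the general lower bound
\[ \fg(X) \ge \frac{-\iota + \sqrt{\iota^2 + (9/2) d}}{9} - 1 \]
collapses to
\[ \fg(X_{n+2}) \ge \frac{\sqrt{(9/2)(n+2)}}{9} - 1. \]
The only task is then to simplify the radical: since $\sqrt{9/2}/9 = 3/(9\sqrt{2}) = 1/(3\sqrt{2})$, we obtain
\[ \fg(X_{n+2}) \ge \frac{1}{3\sqrt{2}}\sqrt{n+2} - 1, \]
as claimed.

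There is no real obstacle here beyond verifying that the hypotheses of Corollary~\ref{cor:explicit_bound} are satisfied, namely that $n \ge 3$ (which is given) and that a very general degree $n+2$ hypersurface in $\PP^{n+1}_{\CC}$ is within the scope of that corollary (it is, since no upper bound on $d$ is required). The improvement in the constant compared with Theorem~\ref{thm:asymptotic-index-sqrt} (where one obtains $\tfrac{1}{5}$) comes from the fact that $\iota = 0$ eliminates the linear term $-\iota$ under the square root, so the bound is controlled purely by $\sqrt{(9/2)d}/9$ rather than by a square root with a subtractive correction. Thus the proof is essentially a one-line substitution into the already established inequality.
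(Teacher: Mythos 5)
Your substitution $d=n+2$, $\iota=0$ into Corollary~\ref{cor:explicit_bound} and the simplification $\sqrt{9/2}/9 = 1/(3\sqrt{2})$ are exactly how the paper derives this corollary. This is correct and is the same approach.
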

Thus, asymptotically the fibering genus of Calabi--Yau hypersurfaces grows by a factor of \(\sqrt{n+2}\). We emphasize that Theorem~\ref{thm:fib_genus_inequality} gives sharper bounds than Corollary~\ref{cor:CY-asymptotic}.

\begin{remark}
    As mentioned in the introduction, Grassi and Grassi--Wen showed that smooth Calabi--Yau hypersurfaces do not birationally admit genus \(1\) fibrations (conditionally on standard MMP conjectures if \(n\geq 6\)) \cite{Grassi91,GrassiWen}. Our methods recover their bound for very general Calabi--Yau hypersurfaces for \(n=3\), \(n=5\), and \(n\geq 8\) (unconditionally on the MMP).
\end{remark}

Next, for any \(d\), applying Jensen's inequality to Corollary~\ref{cor:explicit_bound} yields the following bound for a very general hypersurface \(X\subset\PP^{n+1}_{\CC}\) of degree $d$ and dimension $n \ge 3$:
\[ \fg(X) \ge \frac{1 + 2^{-1/2} \sign(d-n-2)}{9} \cdot (d - n - 2) + \sqrt{\frac{d}{36}} - 1. \]




This result illustrates the expected behavior of $\fg(X)$. In the $d \gg n$ regime, there is a dominant term linear in $d-n$, and in the regime $d \le n$, the dominant behavior is $\sqrt{d}$. However, the constants that appear in this estimate are certainly not optimal.

Finally, we would first like to point out how earlier work of the first author and fourth author can be used to rule out fibrations in curves of geometric genus $g \geq 2$ for very general complex Fano hypersurfaces in a certain range. Recall that, using the methods in the present paper, one can obtain a similar bound with an improved constant and without having to separately rule out the possibility that \(\fg(X)=1\) (Theorem~\ref{thm:asymptotic-index-sqrt}).

\begin{remark}\label{rem:MapsToRuledVar}
Let $X \subset \PP^{n+1}_{\CC}$ be a very general hypersurface of degree $d$. If $d > n+1-\tfrac{1}{4}\sqrt{n+2}$ and $X \dashrightarrow B$ is a fibration in curves of geometric genus $g \geq 2$, then one can use the main result of \cite{CS-deg-irr} to show that $g \geq 1 + \tfrac{1}{8}\sqrt{n+2}$. Indeed, given a rational fibration $X \dashrightarrow B$ in curves of geometric genus $g \geq 2$, after resolving the map and resolving the source, we have a morphism $\pi \colon \tilde{X} \rightarrow B$ whose general fiber is a smooth curve of genus $g$. Consider the composition of maps
\[ X \dashrightarrow \PP \pi_{\ast}\omega_{\tilde{x}/B} \simeq_{\text{bir.}} \PP^{g-1}_{B} \dashrightarrow \PP^{1}_{B} \]
where the last map is given by projecting away from $g-2$ general rational sections. This map has degree $2g-2$. On the other hand, by \cite{CS-deg-irr} we know that the minimal degree map from $X$ to a ruled variety is $\geq \tfrac{1}{4}\sqrt{n+2}$. So
\[ 2g-2 \geq \frac{1}{4}\sqrt{n+2} \implies g \geq 1 + \frac{1}{8}\sqrt{n+2}. \]
\end{remark}

\bibliographystyle{alpha}
\bibliography{references}

\footnotesize{
\textsc{Department of Mathematics, Columbia University, New York 10027} \\
\indent \textit{E-mail address:} \href{mailto:nathanchen@math.columbia.edu}{nathanchen@math.columbia.edu}

\textsc{Department of Mathematics, Stanford University, California 94305} \\
\indent \textit{E-mail address:} \href{mailto:bvchurch@stanford.edu}{bvchurch@stanford.edu}

\textsc{Department of Mathematics, University of Michigan, Ann Arbor, Michigan 48109} \\
\indent \textit{E-mail address:} \href{mailto:lenaji.math@gmail.com}{lenaji.math@gmail.com}

\textsc{Department of Mathematics, University of Michigan, Ann Arbor, Michigan 48109} \\
\indent \textit{E-mail address:} \href{mailto:dajost@umich.edu}{dajost@umich.edu}
}

\end{document}